\newtheorem{thm}{Theorem}
\newtheorem{lem}{Lemma}
\newtheorem{prop}{Proposition}
\newtheorem*{exm}{Example}
\newtheorem*{exs}{Examples}
\newtheorem{rem}{Remark}
\newtheorem{conj}{Conjecture}
\newcommand{\al}{\alpha}
\newcommand{\be}{\beta}
\newcommand{\de}{\delta}
\newcommand{\la}{\lambda}
\renewcommand{\leq}{\leqslant}
\renewcommand{\geq}{\geqslant}
\renewcommand{\phi}{\varphi}
\DeclareMathOperator{\wt}{wt}
\newlength{\cellsz}
\newcounter{cellsize}
\newcommand{\setcellsize}[1]{%
  \setcounter{cellsize}{#1}%
  \setlength{\cellsz}{\value{cellsize}\unitlength}}%
\newcommand\cellify[1]{\def\thearg{#1}\def\nothing{}%
\hbox to 0pt{{\begin{picture}(\value{cellsize},\value{cellsize})
  \put(0,0){\line(1,0){\value{cellsize}}}
  \put(0,0){\line(0,1){\value{cellsize}}}
  \put(\value{cellsize},0){\line(0,1){\value{cellsize}}}
  \put(0,\value{cellsize}){\line(1,0){\value{cellsize}}} \end{picture}
\hss}}
\vbox to \cellsz{ \vss \hbox to \cellsz{\hss$#1$\hss} \vss}}
\newcommand\tableau[1]{\vcenter{\vbox{\let\\\cr
\baselineskip -16000pt \lineskiplimit 16000pt \lineskip 0pt
\ialign{&\cellify{##}\cr#1\crcr}}}}
\newcommand\tabl[1]{\vtop{\let\\\cr
\baselineskip -16000pt \lineskiplimit 16000pt \lineskip 0pt
\ialign{&\cellify{##}\cr#1\crcr}}}
\author{Andrei L. Kanunnikov \and Ekaterina A. Vassilieva}
\title{A recurrence formula for Jack connection coefficients}
\begin{document}

\maketitle
\begin{abstract}
This article is devoted to the study of Jack connection coefficients, a generalization of the connection coefficients of the classical commutative subalgebras of the group algebra of the symmetric group closely related to the theory of Jack symmetric functions. First introduced by Goulden and Jackson (1996) these numbers indexed by three partitions of a given integer $n$ and the Jack parameter $\alpha$ are defined as the coefficients in the power sum expansion of the Cauchy sum for Jack symmetric functions. While very little is known about them, examples of computations for small values of $n$ tend to show that the nice properties of the special cases $\alpha =1$ (connection coefficients of the class algebra) and $\alpha = 2$ (connection coefficients of the double coset algebra) extend to general $\alpha$. Goulden and Jackson conjectured that Jack connection coefficients are polynomials in $\beta = \alpha-1$ with non negative integer coefficients given by some statistics on matchings on a set of $2n$ 
elements, the so called {\em Matchings-Jack conjecture}.\\
In this paper we look at the case when two of the integer partitions are equal to the single part $(n)$ and use a framework by Lasalle (2008) for Jack symmetric functions to show that the coefficients satisfy a simple recurrence formula that makes their computation very effective and allow a better understanding of their properties. In particular we prove the Matchings-Jack conjecture in this case. Furthermore, we provide a bijective proof of the recurrence formula for $\alpha \in \{1,2\}$ using the combinatorial interpretation of the coefficients for these specific values of the Jack parameter. Finally we exhibit the polynomial properties of more general coefficients where the two single part partitions are replaced by an arbitrary number of integer partitions either equal to $(n)$ or $[1^{n-2}2]$. 
\end{abstract}


\section{Introduction}
\label{sec:in}
\subsection{Integer partitions} 
For any integer $n$ we denote $[n]=\{1,\ldots,n\}$, $S_n$ the symmetric group on $n$ elements and $\la=(\la_1,\la_2,\ldots,\la_p) \vdash n$ an integer partition of $|\la| = n $ with $\ell(\la)=p$ parts sorted in decreasing order. If $m_i(\la)$ is the number of parts of $\la$ that are equal to $i$, then we may write $\la$ as $[1^{m_1(\la)}\,2^{m_2(\la)}\ldots]$ and define $Aut_{\la}=\prod_i m_i(\la)!$ and $z_\lambda =\prod_i i^{m_i(\lambda)}m_i(\lambda)!$. A partition $\lambda$ is usually represented as a Young diagram of $|\la|$ boxes arranged in $\ell(\la)$ lines so that the $i$-th line contains $\la_i$ boxes.  Given a box $s$ in the diagram of $\lambda$, let $l'(s),l(s),a(s),a'(s)$ be the number of boxes to the north, south, east, west of $s$ respectively. These statistics are called {\bf co-leglength, leglength, armlength, co-armlength} respectively. We define for some parameter $\alpha$:
\begin{align}
h_{\lambda}(\alpha)= \prod_{s\in \lambda} (\alpha a(s) + l(s) + 1), \;\;\;\;\;\;\;  h'_{\lambda}(\alpha)= \prod_{s\in \lambda} (\alpha(1+a(s)) + l(s)),
\end{align}
and denote $j_\la(\al)$ the product $j_\la(\al) = h_{\lambda}(\alpha)h'_{\lambda}(\alpha)$.\\
Finally we adopt the following notations consistent with \cite{ML08} for operations on integer partitions. We denote for a partition $\la$ containing at least one part $k$ $\la_{\downarrow(k)}$ the partition obtained from $\la$ by removing one of the parts $k$ and adding a part $k-1$ and $\la^{\uparrow(k)}$ the partition obtained from $\la$ by removing one of the parts $k$ and adding a part $k+1$. If $\la$ contains a part $k$ and a part $l$ we denote $\la_{\downarrow(k,l)}$ the partition obtained from $\la$ by removing a part $k$ and a part $l$ and adding a part $k+l-1$. Finally if $\la$ contains a part $k+l+1$ we denote $\la^{\uparrow(k,l)}$ the partition obtained from $\la$ by adding a part $k$ and a part $l$ and removing a part $k+l+1$.
\begin{equation} \label{part}
\begin{aligned} & \la_{\downarrow(k)}= \la\setminus k \cup (k-1), \quad &\la_{\downarrow(k,l)}= \la\setminus (k,l)\cup(k+l-1), \\
& \la^{\uparrow(k)}= \la\setminus k \cup (k+1), \quad & \la^{\uparrow(k,l)}= \la \setminus (k+l+1) \cup (k,l). \end{aligned}
\end{equation}


\subsection{Classes of permutations indexed by partitions}
The {\bf conjugacy classes} of the symmetric group $S_n$ are indexed by partitions of $n$ according to the cycle type of the permutations they contain. For $\la \vdash n$ we denote $C_\lambda$ the class of permutations of cycle type $\lambda$. The cardinality of the conjugacy classes is given by $|C_\la| = n!/z_\la$. We look at {\bf matchings} of the set $[n]\cup [\widehat{n}]=\{1,\ldots n, \widehat{1},\ldots, \widehat{n}\}$ which we view as fixed point free involutions in $S_{2n}$. Note that for $f,g$ fixed point free involutions of $S_{2n}$, the disjoint cycles of the product $f\circ g$ have repeated lengths i.e. $f\circ g \in C_{\la\la}$ for some $\la \vdash n$. We consider the {\bf hyperoctahedral group} $B_n$ as the centralizer of $f_\star = (1\widehat{1})(2\widehat{2})\cdots(n\widehat{n})$ in $S_{2n}$. As shown in e.g. \cite[VII.2]{IGM} the {\bf double cosets} of $B_n$ in $S_{2n}$ are also indexed by integer partitions of $n$. We denote by $K_\lambda$ the double coset indexed by $\la \vdash n$ 
consisting of all the 
permutations $\omega$ of $S_{2n}$ such that $f_\star \circ\omega\circ f_\star\circ\omega^{-1}$ belongs to $C_{\la\la}$. According to this definition $K_{\la}=B_n\omega B_n$ for any $\omega$ in $K_{\la}$ and, in particular, $B_n=K_{[1^n]}$. We have \cite[VII.2, (2.3)]{IGM} $|B_n| = 2^nn!$ and $|K_\la| = |B_n|^2/(2^{\ell(\la)}z_\la)$.


\subsection{Symmetric functions}
Let $\Lambda$ be the ring of symmetric functions. Denote $m_\lambda(x)$ the monomial symmetric function indexed by $\lambda$ on indeterminate $x$, $p_\lambda(x)$ and $s_\lambda(x)$  the power sum and Schur symmetric functions respectively. Whenever the indeterminate is not relevant we shall simply write $m_\la$, $p_\la$ and $s_\la$. Let $\langle \cdot\, ,\cdot \rangle$ be the scalar product on $\Lambda$ such that the power sum symmetric functions verify $\left <p_\la,p_\mu \right> = z_\la\delta_{\la\mu}$ where $\delta_{\la\mu}$ is the Kronecker delta. The Schur symmetric functions $s_\la$ are characterized by the fact that they form an orthogonal basis of $\Lambda$ for $\langle \cdot\, ,\cdot \rangle$  and the transition matrix between Schur and monomial symmetric functions is upper triangular. 

The {\bf zonal polynomials} $Z_\la$ constitute another important basis of $\Lambda$ directly linked with the theory of the {\bf zonal spherical functions}. Zonal polynomials verify the same properties as the $s_\la$ if the scalar product is replaced by $\langle \cdot,\cdot \rangle_2$ with $\langle p_\la,p_\mu \rangle_2 = 2^{\ell(\la)}z_\la\delta_{\la\mu}$. In the general case, using an additional parameter $\alpha$, Henry Jack \cite{HJ} introduced the bases of {\bf Jack symmetric functions} $J^\alpha_\la$ orthogonal for the alternative scalar product $\langle \cdot\, ,\cdot \rangle_\alpha$ defined by $\langle p_\la,p_\mu \rangle_\alpha = \alpha^{\ell(\la)}z_\la\delta_{\la\mu}$. We use the normalization of Jack symmetric functions such that $[m_\la]J^\alpha_\la=h_\la(\alpha)$. As a result we have 
\begin{equation} \label{norm12} J^1_\la = h_\la(1)s_\la \quad \text{ and } \quad J^2_\la =Z_\la.\end{equation} Moreover as shown by  Stanley in \cite{S89}, 
\begin{equation}
\label{eq : jla}\langle J^\al_\la,J^\al_\mu \rangle_\alpha = j_\la(\al)\delta_{\la\mu}.
\end{equation}


\subsection{Classical connection coefficients and their combinatorial interpretation} \label{ccc}
By abuse of notation let $C_\lambda$ (resp. $K_\lambda$) also represent the formal sum of its elements in the group algebra $\mathbb{C} S_{n}$ (resp. $\mathbb{C} S_{2n}$). So $\{C_\la\mid \la \vdash n\}$ forms a basis of the {\bf class algebra} i.e. the center of $\mathbb{C} S_n$ and $\{K_\la\mid \la \vdash n\}$ forms a basis of the {\bf double coset algebra} i.e. the commutative subalgebra of $\mathbb{C}S_{2n}$ identified as the {\bf Hecke algebra of the Gelfand pair} $(S_{2n},B_n)$.
For integer $s\geq 2$ and partitions $\la^1,\ldots,\la^s \vdash n$, we define the {\bf connection coefficients of the class algebra} $c^{\la^1}_{\lambda^2,\ldots,\la^s}$ and the {\bf connection coefficients of the double coset algebra} $b^{\la^1}_{\lambda^2,\ldots,\la^s}$ by
\begin{equation}
 c^{\la^1}_{\lambda^2,\ldots,\la^s} = [C_{\la^1}]\prod_{i \geq 2}C_{\lambda^i}, \;\;\;\;\; b^{\la^1}_{\lambda^2,\ldots,\la^s} = [K_{\la^1}]\prod_{i \geq 2}K_{\lambda^i}.
\end{equation}
From a combinatorial point of view $c^{\la^1}_{\lambda^2,\ldots,\la^s}$ (resp. $b^{\la^1}_{\lambda^2,\ldots,\la^s}$) is the number of ways to write a given permutation $\sigma_1$ of $C_{\la^1}$ (resp. $K_{\la^1}$) as the ordered product of $s-1$ permutations $\sigma_2\circ\ldots\circ\sigma_s$ where $\sigma_i$ is in $C_{\lambda^i}$ (resp. $K_{\la^i}$).\\ 
The coefficients $c^\la_{\mu\nu}$ and $b^{\la}_{\mu\nu}$ also admits a nice interpretation in terms of graphs on $2n$ vertices (see e.g. \cite{GJ96, S92}). For a given partition $\la=(\la_1,\ldots,\la_p)$ of integer $n$, consider the graph $G$ on $2n$ vertices consisting of $p$ cycles of lengths $2\la_1, \ldots, 2\la_p$. A matching in $G$ is a set of edges without common vertices that contains all the vertices of $G$. Coloring successively the edges of the cycles of $G$ in gray and black colors, we get two matchings: {\bf g} (gray edges) and {\bf b} (black edges).  We call such a~two-colour graph induced by $\la$ {\bf a~$\la$-graph}. Label the vertices of $G$ by $[n]\cup[\widehat{n}]$ such that edges $\{i,\widehat{i}\}$ are gray and the vertices of the $i$-th cycle are successively labelled $$\sum_{j=1}^{i-1}\la_j+1,\widehat{\sum_{j=1}^{i-1}\la_j+1},\sum_{j=1}^{i-1}\la_j+2,\widehat{\sum_{j=1}^{i-1}\la_j+2},\ldots,\sum_{j=1}^{i}\la_j,\widehat{\sum_{j=1}^{i}\la_j}.$$ We call such labeling {\bf 
canonical} and a matching in which all edges are of kind $\{i,\widehat{j}\}$, $1\leq i,j\leq n$, {\bf bipartite}. 

\begin{exm} Figure \ref{la_g} depicts a canonically labeled $\la$-graph for $\la=(3,2,2,1)$. 
\end{exm}
\begin{figure}[htbp]
\begin{center}
 \includegraphics[scale=0.7]{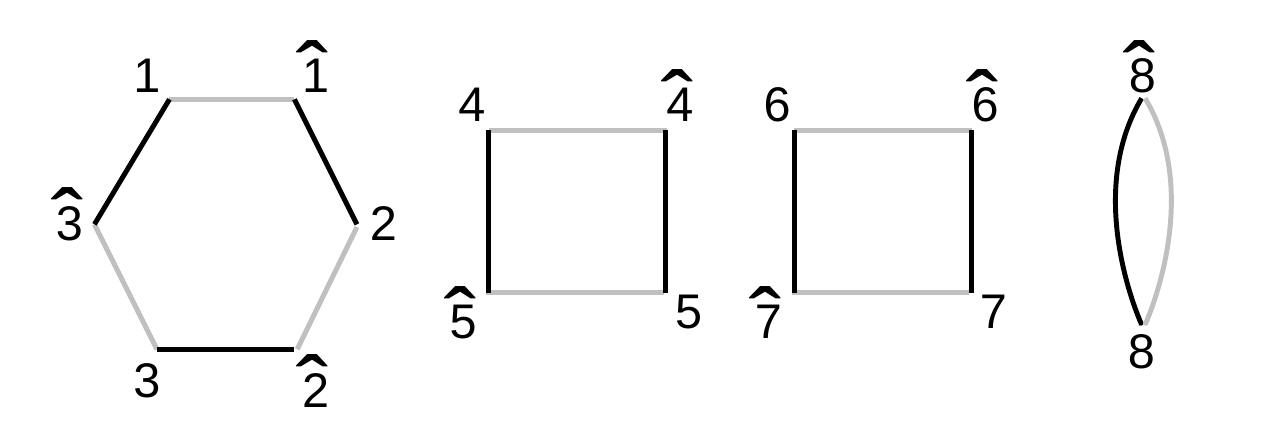}\caption{the union of the black and gray matchings forms the $(3,2,2,1)$-graph.}
 \label{la_g}
 \end{center}
 \end{figure}
Denote $\tilde{b}^{\la}_{\mu\nu}$ the quantity $\tilde{b}^{\la}_{\mu\nu}:=b^{\la}_{\mu\nu}/|B_n|.$ We have the following proposition.
\begin{prop}[\cite{GJ96}, proposition 4.1; \,\cite{S92}, Lemma 3.2] \label{combint}  Let $\la \vdash n$ and $G$ the induced $\la$-graph. Defining the matchings {\bf b} and  {\bf g} in $G$ as above, one has
\begin{itemize}
\item[(1)] $\tilde{b}^\la_{\mu \nu}$ is the number of matchings $\de$ such that the graphs {\bf b} $\cup$ $\de$ and {\bf g} $\cup$ $\de$ are respectively a $\mu$-graph and a $\nu$-graph;
\item[(2)] $c^\la_{\mu \nu}$ is the number of {\bf bipartite} matchings $\de$ such that the graphs {\bf b} $\cup$ $\de$ and {\bf g} $\cup$ $\de$ are respectively a $\mu$-graph and a $\nu$-graph.
\end{itemize}
\end{prop}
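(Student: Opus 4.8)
The plan is to obtain both parts from two elementary facts, together with the combinatorial description of $c^\la_{\mu\nu}$ and $\tilde{b}^\la_{\mu\nu}$ recalled above. \textbf{Fact A.} For fixed point free involutions $f,g$ of $S_{2n}$ the multigraph $f\cup g$ on $[n]\cup[\widehat n]$ is $2$-regular, hence a disjoint union of cycles of even length alternating $f$- and $g$-edges; following such a cycle of length $2k$ shows that $gf$ permutes its vertices as two $k$-cycles (one on the vertices reached by an $f$-edge, one on those reached by a $g$-edge), so $f\cup g$ is a $\rho$-graph if and only if $fg\in C_{\rho\rho}$ (equivalently $gf\in C_{\rho\rho}$). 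If moreover $f,g$ are bipartite, encoding permutations $\phi,\psi\in S_n$ via $\{i,\widehat j\}\mapsto j$, these two $k$-cycles lie one inside $[n]$ and one inside $[\widehat n]$, and the same traversal identifies $fg$ restricted to $[n]$ with $\phi^{-1}\psi$ (up to inversion, irrelevant for cycle type); thus $f\cup g$ is a $\rho$-graph iff $\phi^{-1}\psi\in C_\rho$. \textbf{Fact B.} The map $\Phi\colon S_{2n}\to\{\text{matchings}\}$, $\Phi(x)=xf_\star x^{-1}$, is onto and its fibre over any matching is a left coset of $B_n=C_{S_{2n}}(f_\star)$, hence has $|B_n|$ elements; it restricts to a bijection from $S_n$ onto the bipartite matchings. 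Under the canonical labelling of $G=\mathbf{g}\cup\mathbf{b}$ one has $\mathbf{g}=f_\star$, the matching $\mathbf{b}$ is the bipartite matching encoding a permutation $\si_0\in C_\la$, and Fact A applied to $G$ gives $\mathbf{g}\mathbf{b}\in C_{\la\la}$.

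\textbf{Part (2).} Since $c^\la_{\mu\nu}$ counts the ways of writing a fixed element of $C_\la$ as an ordered product of an element of $C_\mu$ and one of $C_\nu$, taking the representative $\si_0$ above gives $c^\la_{\mu\nu}=\#\{\tau\in C_\nu:\si_0\tau^{-1}\in C_\mu\}=\#\{\tau\in C_\nu:\si_0^{-1}\tau\in C_\mu\}$ (the last equality because $C_\mu$ is closed under inversion and conjugation). For a bipartite matching $\de\leftrightarrow\tau$, Fact A (with $\mathbf{g}\leftrightarrow\id$ and $\mathbf{b}\leftrightarrow\si_0$) says that $\mathbf{g}\cup\de$ is a $\nu$-graph iff $\tau\in C_\nu$, and $\mathbf{b}\cup\de$ is a $\mu$-graph iff $\si_0^{-1}\tau\in C_\mu$. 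Hence the number of bipartite $\de$ with $\mathbf{b}\cup\de$ a $\mu$-graph and $\mathbf{g}\cup\de$ a $\nu$-graph is $\#\{\tau\in C_\nu:\si_0^{-1}\tau\in C_\mu\}=c^\la_{\mu\nu}$.

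\textbf{Part (1).} Choose $\omega_0\in S_{2n}$ with $\omega_0 f_\star\omega_0^{-1}=\mathbf{b}$; then $f_\star\omega_0 f_\star\omega_0^{-1}=\mathbf{g}\mathbf{b}\in C_{\la\la}$, so $\omega_0\in K_\la$ and it may be used as representative. Now $b^\la_{\mu\nu}$ counts the pairs $(x,y)\in K_\mu\times K_\nu$ with $xy=\omega_0$, that is, the elements $x\in K_\mu$ with $x^{-1}\omega_0\in K_\nu$; set $\de=\Phi(x)=xf_\star x^{-1}$. From $f_\star xf_\star x^{-1}=\mathbf{g}\cdot\de$ and Fact A, $x\in K_\mu\iff\mathbf{g}\cup\de$ is a $\mu$-graph. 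From $f_\star(x^{-1}\omega_0)f_\star(x^{-1}\omega_0)^{-1}=f_\star x^{-1}\mathbf{b}\,x$, which is conjugate by $x$ to $(xf_\star x^{-1})\mathbf{b}=\de\cdot\mathbf{b}$, and Fact A again, $x^{-1}\omega_0\in K_\nu\iff\mathbf{b}\cup\de$ is a $\nu$-graph. Therefore $b^\la_{\mu\nu}$ is the number of $x\in S_{2n}$ with $\mathbf{g}\cup\Phi(x)$ a $\mu$-graph and $\mathbf{b}\cup\Phi(x)$ a $\nu$-graph; these conditions depend on $x$ only through $\Phi(x)$, and by Fact B each matching is $\Phi(x)$ for exactly $|B_n|$ values of $x$, so $\tilde{b}^\la_{\mu\nu}=b^\la_{\mu\nu}/|B_n|$ is the number of matchings $\de$ with $\mathbf{g}\cup\de$ a $\mu$-graph and $\mathbf{b}\cup\de$ a $\nu$-graph. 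Finally the commutativity of the double coset algebra gives $\tilde{b}^\la_{\mu\nu}=\tilde{b}^\la_{\nu\mu}$, which exchanges the roles of $\mu$ and $\nu$ and produces the stated form.

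\textbf{Main difficulty.} The only step beyond routine bookkeeping is the reduction in Part (1) of the membership ``$x^{-1}\omega_0\in K_\nu$'' to a condition on the single matching $\de=\Phi(x)$ --- recognising $f_\star x^{-1}\mathbf{b}\,x$ as a conjugate of $\Phi(x)\cdot\mathbf{b}$ --- together with the point that choosing $\omega_0$ with $\omega_0 f_\star\omega_0^{-1}=\mathbf{b}$ automatically places it in $K_\la$, which is exactly the relation $\mathbf{g}\mathbf{b}\in C_{\la\la}$, i.e.\ Fact A applied to $G$. Facts A and B, and the replacement of the factorisation counts by a count over one free variable, are straightforward.
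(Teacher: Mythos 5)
The paper does not actually prove this proposition---it is quoted from \cite{GJ96} and \cite{S92}---and your argument is a correct, self-contained reconstruction of the standard proof given in those references: translating ``$\mathbf{b}\cup\de$ is a $\mu$-graph'' into a condition on cycle types of products of involutions (your Fact A), identifying factorization counts in the class and double coset algebras with counts over one free variable, and dividing out the $|B_n|$-to-one map $x\mapsto xf_\star x^{-1}$ in the double coset case. The only blemish is the unused aside in Fact B that $\Phi$ ``restricts to a bijection from $S_n$ onto the bipartite matchings'': with the diagonal embedding of $S_n$ into $S_{2n}$ this is false (one must let $S_n$ permute the hatted letters only, fixing $[n]$), but neither Part (1) nor Part (2) relies on that claim, so the proof stands.
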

In what follows we call a matching $\de$ such that both {\bf b} $\cup$ $\de$ and {\bf g} $\cup$ $\de$ are $2n$-cycles ($(n)$-graphs) a {\bf good matching}. Denote by $\mathcal{G}(\la)$ the set of all good matchings of the canonically labeled $\la$-graph $G$. Due to Proposition \ref{combint} 
\begin{equation} \label{combintn} \tilde{b}^\la_{nn}=|\mathcal{G}(\la)| \quad \text{ and } \quad c^\la_{nn}=|\{\de\in \mathcal{G}(\la)\mid \de \text { is bipartite}\}. \end{equation}
\begin{exs}
1. One can see $\la$-graphs for $\la$ with $|\la|\leq 2$ on Figure \ref{la12}. So we have
\centerline{$\begin{aligned} &\tilde{b}^1_{11}=1,\\ &c^1_{11}=1, \end{aligned}$ \quad
$\begin{aligned} &\tilde{b}^2_{22}=1,\\ &c^2_{22}=0, \end{aligned}$ \quad
$\begin{aligned} &\tilde{b}^{(1,1)}_{22}=2,\\ &c^{(1,1)}_{22}=1.\end{aligned}$}

 \begin{figure}[htbp] \begin{center} \includegraphics[scale=0.6]{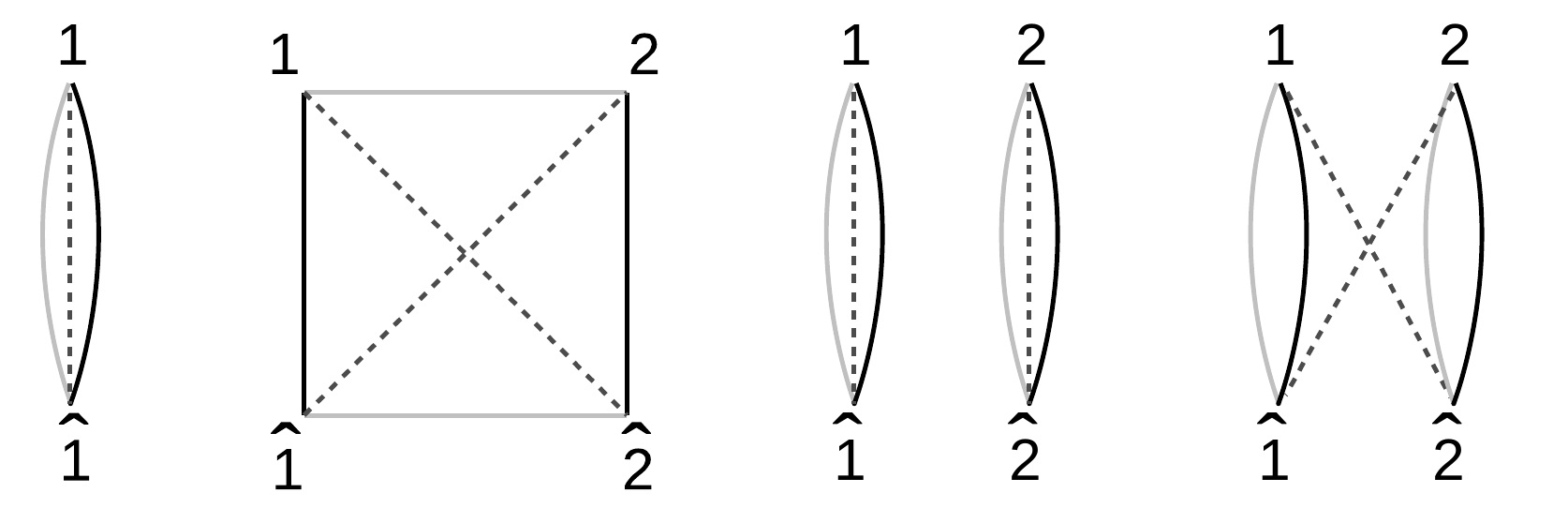} \caption{Good matchings for $|\la|\leq 2$.} \label{la12}  \end{center}\end{figure}

2. Figure \ref{good3} depicts the $4$ good matchings in the case $\la=(3)$. Only the leftmost one is bipartite. As a result
$\tilde{b}^3_{33}=4$ and $c^3_{33}=1$.
\begin{figure}[htbp]\begin{center} \includegraphics[scale=0.6]{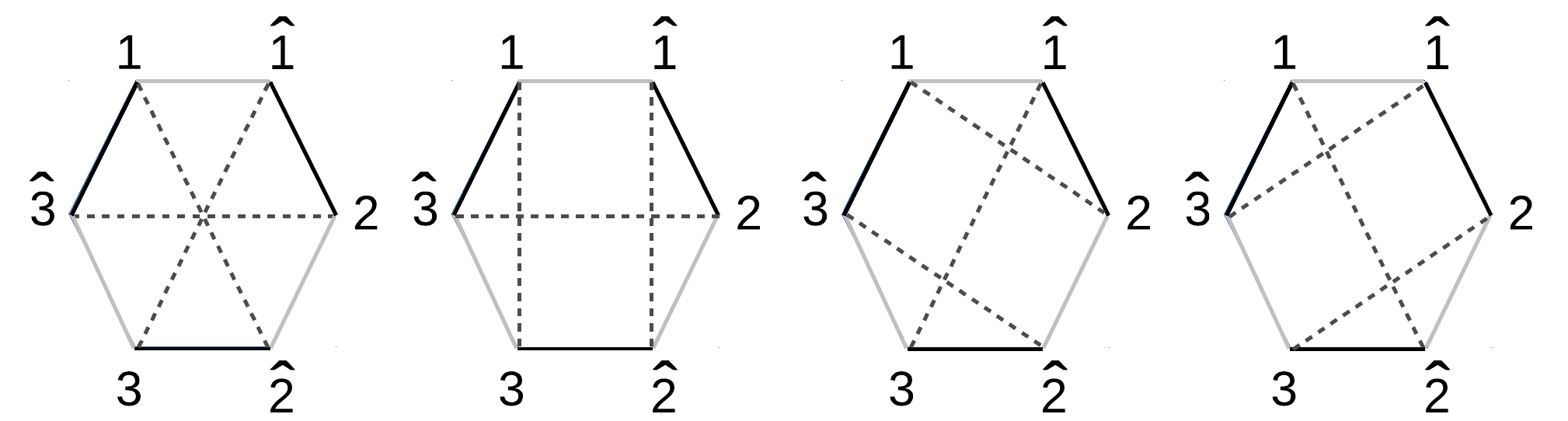} \caption{Good matchings for $\la=(3)$.} \label{good3}\end{center} \end{figure}
\end{exs}

\subsection{Jack connection coefficients and the Matchings-Jack conjecture}
It is easy to show (see e.g. \cite{GJ96}, \cite{HSS}, \cite{V2014}) that connection coefficients are linked to some extended Cauchy sums for Schur and zonal symmetric functions. In particular one has:
\begin{align}
\label{eq : sumc}&\sum_{\la,\mu,\nu \vdash n}z_\la^{-1}c^{\la}_{\mu\nu}p_\la(x)p_\la(y)p_\la(z) = \sum_{\gamma \vdash n}h_\gamma(1)s_\gamma(x)s_\gamma(y)s_\gamma(z)\\
\label{eq : sumb}&\sum_{\la,\mu,\nu \vdash n}2^{-\ell(\la)}z_\la^{-1}\frac{b^{\la}_{\mu\nu}}{|B_n|}p_\la(x)p_\la(y)p_\la(z) = \sum_{\gamma \vdash n}\frac{Z_\gamma(x)Z_\gamma(y)Z_\gamma(z)}{\langle Z_\gamma,Z_\gamma\rangle_{2}}
\end{align}
In the general case, Goulden and Jackson \cite{GJ96} considered the coefficients $a_{\mu\nu}^\la(\al)$ in the power sum expansions of similar sums for Jack symmetric functions
\begin{equation}
\label{eq : def}
\sum_{\la, \mu, \nu \vdash n}\al^{-\ell(\la)}z_\la^{-1}a_{\mu\nu}^\la(\al)p_\la(x)p_\mu(y)p_\nu(z) = \sum_{\gamma \vdash n}\frac{J^{\al}_\gamma(x)J^{\al}_\gamma(y)J^{\al}_\gamma(z)}{\langle J^{\al}_\gamma,J^{\al}_\gamma\rangle_{\al}}
\end{equation} 
In view of Equations (\ref{norm12}), (\ref{eq : sumc}), (\ref{eq : sumb}) and (\ref{eq : def})
\begin{equation} \label{cb} a_{\mu\nu}^\la(1) = c_{\mu\nu}^\la \quad \text{ and } \quad a_{\mu\nu}^\la(2) = \frac{1}{|B_n|}b^{\la}_{\mu,\nu}=\tilde{b}^\la_{\mu \nu}.\end{equation}
Computations of $a^{\la}_{\mu\nu}(\alpha)$ for all $\la,\mu,\nu\vdash n \leq 8$ (see \cite{GJ96}) showed that the $a^{\la}_{\mu\nu}(\alpha)$ are polynomials in $\beta = \al -1$ with non negative integer coefficients and of degree at most $n-\min\{\ell(\mu),\ell(\nu)\}$. Goulden and Jackson conjectured this property for arbitrary $\la,\mu,\nu$. 
Moreover, following the combinatorial interpretation in Proposition \ref{combint}, they also suggest the stronger {\bf Matchings-Jack conjecture.}

\begin{conj}[\cite{GJ96}, conjecture 4.2] For $\la,\mu,\nu\vdash n$
$$a_{\mu\nu}^\la(\be+1) = \sum_{\de}\be^{\wt_{\la}(\de)}$$
where the summation is over all matchings of item (1) in Proposition \ref{combint}, and 
$\wt_\la(\de)\in \{0,1,\ldots, n-\min\{\ell(\mu),\ell(\nu)\}\}$, $\wt_\la(\de)=0 \iff \de$ is bipartite.
\end{conj}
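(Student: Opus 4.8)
The plan is to lift the special-case method to arbitrary lower indices by producing, for each of the three positions, a Lasalle-type recurrence and then matching it against a recurrence on matchings that carries the statistic $\wt_\la$. Concretely, I would first fix $\nu$ to be the shortest partition, so that $\min\{\ell(\mu),\ell(\nu)\}=\ell(\nu)$, and apply to the generating identity (\ref{eq : def}) the building operators that Lasalle's framework supplies on Jack symmetric functions. These operators act on a single part of a partition and, on the dual side of the power-sum expansion, translate into the elementary moves $\la_{\downarrow(k)}$, $\la^{\uparrow(k)}$, $\la_{\downarrow(k,l)}$ and $\la^{\uparrow(k,l)}$ of (\ref{part}). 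The output should be a system of recurrences expressing $a^{\la}_{\mu\nu}(\al)$ in terms of coefficients $a^{\la'}_{\mu'\nu'}(\al)$ with strictly smaller total data, in which every coefficient is an affine function of $\al$, hence of $\be=\al-1$; this alone would already yield the polynomiality half of the statement.

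Next I would isolate the combinatorial target. The matchings $\de$ of item (1) in Proposition \ref{combint} are exactly the completions making $\mathbf{b}\cup\de$ a $\mu$-graph and $\mathbf{g}\cup\de$ a $\nu$-graph; the bipartite ones are counted by $c^{\la}_{\mu\nu}=a^{\la}_{\mu\nu}(1)$, while at $\be=1$ all of them are counted by $\tilde b^{\la}_{\mu\nu}=a^{\la}_{\mu\nu}(2)$. The statistic $\wt_\la(\de)$ must therefore vanish precisely on the bipartite matchings and interpolate between these two enumerations. My candidate is a \emph{measure of non-bipartiteness}: root each cycle of $\mathbf{b}\cup\de$ canonically and count the edges of $\de$ traversed against the hat/non-hat bipartition induced by the canonical labelling, a quantity that is $0$ iff $\de$ is bipartite and is bounded by $n-\ell(\nu)$. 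I would then try to show that peeling one edge of $\de$, or contracting one cycle, shifts $\wt_\la$ by exactly the exponent prescribed by the algebraic recurrence, so that $\sum_{\de}\be^{\wt_\la(\de)}$ obeys the same recurrence as $a^{\la}_{\mu\nu}(\be+1)$ with the same initial data.

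The genuine obstacle is the passage from $\al\in\{1,2\}$ to generic $\al$. For those two values the recurrence coefficients count concrete objects --- factorizations in $S_n$ in the bipartite case, or fixed-point-free involutions realizing double cosets in the full case --- so the required bijective step is available, as in (\ref{combintn}). For generic $\al$ the very same coefficients remain affine in $\be$ but possess no known enumerative model: there is no combinatorial structure interpolating between a permutation factorization and an involution factorization whose generating weight is $\be^{\wt}$. Thus the algebraic recurrence produces terms such as $(k\be+m)$ that must be reproduced, edge by edge, from a \emph{single} insertion into a matching; extracting a coefficient genuinely linear in $\be$ from one local move forces $\wt_\la$ to jump by a controlled amount in a way not dictated by any local feature of $\de$. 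Reconciling this mismatch --- equivalently, exhibiting a weight-preserving correspondence valid for all $\al$ at once, rather than separately at $\al=1$ and $\al=2$ --- is exactly where the present technique stops.

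Finally, even granting a candidate statistic and the recurrence, establishing the non-negativity and integrality of the coefficients would require a sign-reversing involution on the signed expansion coming from the recurrence, or a direct bijection realizing $\wt_\la$. I would attempt the former by pairing the negative contributions, which typically arise from the $\uparrow$ moves, with positive ones via an involution on marked matchings. Absent the generic-$\al$ model described above, however, such an involution can so far only be built at $\al=1,2$; closing this gap is the crux, and it is the reason the Matchings-Jack conjecture for arbitrary $\mu,\nu$ is not settled by the present circle of ideas, whereas the restriction realized in this paper, where two indices equal $(n)$, is within reach.
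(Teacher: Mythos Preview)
Your proposal is not a proof, and you say so yourself: the last two paragraphs explicitly concede that the method stalls for generic $\al$ and that the conjecture ``is not settled by the present circle of ideas.'' That self-assessment is accurate. The statement you were asked to prove is Conjecture~1, which the paper does \emph{not} prove in general; it is the open Matchings--Jack conjecture of Goulden and Jackson. The paper establishes only the special case $\mu=\nu=(n)$ (Theorem~\ref{thm : MJC}), and nothing in your sketch goes beyond that.

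There is also a concrete technical gap in your plan, visible already in the paper's Theorem~\ref{thm : rec}. When $\nu$ is arbitrary, the identity (\ref{eq : recnu}) coming from Lassalle's operators is \emph{not} a recurrence for a single coefficient: its left-hand side is a linear combination $\sum_i i(m_i(\nu)+1)\,a^{\la}_{n+1,\nu^{\uparrow(i-1)}}$ over several distinct $\nu^{\uparrow(i-1)}$. Only when $\nu=(n)$ does this collapse to a single term, and even then an additional, non-obvious fact --- that the bracketed expression in (\ref{eq : recnu}) is independent of $i$ (Proposition~\ref{lem : TiTr}) --- is needed to extract the clean recursion (\ref{eq : rec}). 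Your outline assumes one can ``apply building operators'' and obtain a recursive system for general $\mu,\nu$, but the paper's own computation shows this is exactly what fails: for general $\nu$ you get a linear relation among several unknowns, not a recursion determining one of them.

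Finally, your candidate statistic (``count the edges of $\de$ traversed against the hat/non-hat bipartition'') is not shown to satisfy any recurrence, nor to lie in $\{0,\ldots,n-\ell(\nu)\}$; and as the paper's Remark after Theorem~\ref{thm : MJC} illustrates even in the $(n),(n)$ case, a workable $\wt_\la$ need not be canonical and may depend on auxiliary choices, so a purely local ``non-bipartiteness'' count is unlikely to be the right object without further refinement.
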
 
They proved this conjecture in the cases $\la =[1^n]$ and $\la = [1^{n-2}2^1]$. Following \cite{V2014}, we use the terminology of {\bf Jack connection coefficients} for $a_{\mu\nu}^\la(\al)$ (although the normalization in \cite{V2014} differs from the one of this paper). Our work is mainly devoted to the case $\mu=\nu=(n)$. We show various results for Jack connection coefficients in this case, including the proof of the Matchings Jack conjecture. 

\subsection{Main results}

Our proof of Goulden and Jackson's conjecture is based on the recurrence formula (\ref{eq : rec}) which derives from the
following theorem proved in Section \ref{sec : alg}.\\
For simplicity, we denote $n$ instead of $(n)$ in the indices of the coefficients. 

\begin{thm}
\label{thm : rec}
For positive integer $n$ and integer partitions $\nu \vdash n$ and $\la \vdash n+1$, the Jack connection coefficients defined in Equation (\ref{eq : def}) satisfy the following formula:
\begin{align} 
\nonumber \sum_{i:\,m_{i-1}(\nu) \neq 0}i(m_i(\nu)+1)&a_{n+1,\nu^{\uparrow(i-1)}}^\la(\al) = \sum_{i =1}^{\ell(\la)}\la_i \left[ (\al-1)(\la_i-1)a_{n,\nu}^{\la_{\downarrow(\la_i)}}(\al)\right.\\
\label{eq : recnu}&+\sum_{d=1}^{\la_i-2}a_{n,\nu}^{\la^{\uparrow(\la_i-1-d,d)}}(\al)+\al\sum_{j\neq i}\left.\la_ja_{n,\nu}^{\la_{\downarrow(\la_i,\la_j)}}(\al)\right].
\end{align}
\end{thm}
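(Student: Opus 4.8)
The plan is to work inside the ring $\Lambda$ of symmetric functions and to extract the recurrence from the action of a suitable differential (or creation/annihilation) operator on the Jack symmetric functions $J^\al_\gamma$. The generating identity (\ref{eq : def}) tells us that $\al^{-\ell(\la)}z_\la^{-1}a^\la_{\mu\nu}(\al)$ is the coefficient of $p_\la(x)p_\mu(y)p_\nu(z)$ in $\sum_\gamma J^\al_\gamma(x)J^\al_\gamma(y)J^\al_\gamma(z)/\langle J^\al_\gamma,J^\al_\gamma\rangle_\al$. Fixing $\mu=(n+1)$ and specializing the $x$-variables so that only $p_{n+1}(x)$ survives, and using that $\langle p_{n+1}, J^\al_\gamma\rangle_\al$ picks out (up to a scalar) a single term, one is reduced to an identity purely in the $y,z$ variables involving $\sum_\gamma (\text{scalar}_\gamma)\, J^\al_\gamma(y)J^\al_\gamma(z)$. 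The left-hand side of (\ref{eq : recnu}), after multiplying by the appropriate $z$-factors, corresponds to applying in the $z$-variables the operator dual to multiplication by $p_1$ (the "lowering" operator $p_1^\perp = \alpha\,\partial/\partial p_1$) to the Cauchy-type kernel indexed by $\nu$; the combinatorial prefactor $\sum_i i(m_i(\nu)+1)$ is exactly what appears when $p_1^\perp$ is expressed on the power-sum basis and the index is shifted from $\nu$ to $\nu^{\uparrow(i-1)}$.

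The core of the argument is Lasalle's framework from \cite{ML08}: there is an explicit second-order differential operator (the Laplace–Beltrami / Sekiguchi-type operator, or more precisely Lasalle's operator built from it) whose eigenfunctions are the $J^\al_\gamma$ with known eigenvalues, and Lasalle gives Pieri-type and "adding a box" formulas expressing $p_1 \cdot J^\al_\gamma$ and $p_1^\perp J^\al_\gamma$ in the Jack basis. Concretely, I would:
(i) recall the operator $D$ from \cite{ML08} acting on $\Lambda$ that satisfies $D J^\al_\gamma = \theta_\gamma(\al) J^\al_\gamma$ and whose expression on power sums has the shape $\sum \al\, p_{k+l}\, p_k^\perp p_l^\perp + \cdots$ matching the right-hand side of (\ref{eq : recnu}) term by term (the $(\al-1)(\la_i-1)$ term, the sum over $d$, and the $\al\sum_{j\neq i}$ term are precisely the three families of monomials in such an operator when one reads off the coefficient of $p_\la$);
(ii) apply $D$ to the Cauchy kernel $\sum_\gamma J^\al_\gamma \otimes J^\al_\gamma \otimes J^\al_\gamma/\langle\cdot,\cdot\rangle_\al$ in the $x$-variables and, separately, express the result via the eigenvalue $\theta_\gamma(\al)$ and then move that scalar onto the $z$-side, where it is realized as the lowering operator applied in the $z$-variables;
(iii) specialize $x$ to extract the $p_{n+1}$-component, $\mu=(n+1)$, and read off coefficients of $p_\la(x)p_{n+1}(y)p_\nu(z)$ on both sides; the bookkeeping of the scalars $\al^{-\ell(\la)}z_\la^{-1}$ against $z_{\la_{\downarrow(\la_i)}}$, $z_{\la^{\uparrow(\la_i-1-d,d)}}$, $z_{\la_{\downarrow(\la_i,\la_j)}}$ produces exactly the integer prefactors $\la_i(\la_i-1)$, $\la_i$, $\al\la_i\la_j$ after simplification.

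The main obstacle, and where I expect most of the work to lie, is step (iii): matching the various $z_\lambda$ normalization factors and the multiplicities $m_i(\cdot)$ coming from the partition operations in (\ref{part}) so that the messy rational expression collapses to the clean integer coefficients in (\ref{eq : recnu}) — in particular verifying that the "self-interaction" term $p_k^\perp$ applied twice to the same part contributes the $(\al-1)(\la_i-1)$ coefficient (the $\al-1$ arising from an $\al a(s)+l(s)$-type cancellation intrinsic to Jack, not zonal, combinatorics) while the cross term over $j\neq i$ keeps a bare $\al$. A secondary subtlety is justifying the specialization/duality step rigorously: one must argue that the identity of formal power series in infinitely many variables survives restricting to the $p_{n+1}(x)$-isotypic component, which is routine but should be stated. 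Everything else — the eigenvalue computation, the Pieri rule, the action of $p_1^\perp$ on power sums — is available verbatim from \cite{ML08} and \cite{S89}.
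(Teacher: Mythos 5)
Your proposal lands in the right framework---Lassalle's Laplace--Beltrami operator, the Pieri and dual Pieri rules, and extraction of power-sum coefficients from the Cauchy-type kernel is exactly the paper's strategy---but two of the three operators you name are the wrong ones, and the one idea that actually turns the statement into a relation between level $n+1$ and level $n$ is missing. First, the operator producing the left-hand side is not $p_1^\perp=\al\,\partial/\partial p_1$: that operator deletes a part equal to $1$ and cannot produce the shift from $\nu^{\uparrow(i-1)}$ to $\nu$ (a part $i$ lowered to $i-1$) with weight $i(m_i(\nu)+1)$. The correct operator is the commutator $E_2^\perp=[p_1^\perp/\al,D(\al)]=\sum_{k\geq 1}(k+1)p_k\,\partial/\partial p_{k+1}$. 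Second, the operator producing the right-hand side is not $D(\al)$ itself: $D(\al)$ preserves degree, whereas the three partition operations $\la_{\downarrow(\la_i)}$, $\la^{\uparrow(\la_i-1-d,d)}$, $\la_{\downarrow(\la_i,\la_j)}$ all take a partition of $n+1$ to a partition of $n$, so the operator whose power-sum expansion matches them must shift degree by one. It is the double commutator $[D(\al),E_2]=[D(\al),[D(\al),p_1/\al]]$, whose three families of monomials $(\al-1)(i-1)^2p_i\,\partial/\partial p_{i-1}$, $(i+j-1)p_ip_j\,\partial/\partial p_{i+j-1}$ and $\al\, ij\,p_{i+j+1}\,\partial^2/\partial p_i\partial p_j$ give the three bracketed terms of (\ref{eq : recnu}).

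The more serious gap is in your step (ii). Applying $D(\al)$ to the kernel, replacing it by its eigenvalue, and moving that scalar to another set of variables is a degree-preserving manipulation: it can only yield identities among coefficients indexed by partitions of the same integer, never the $n+1\to n$ recurrence. The mechanism the paper uses is the multiplicative identity $\theta_{n+1}^{\gamma^{(i)}}(\al)=\theta_n^{\gamma}(\al)\bigl(\theta^{\gamma^{(i)}}_{[1^{n-1}2]}(\al)-\theta^{\gamma}_{[1^{n-2}2]}(\al)\bigr)$ (Lemma \ref{lem : theta}, an immediate consequence of Stanley's product formula (\ref{eq : theta_n}) for $\theta^\la_{(n)}$ together with the fact that $\gamma^{(i)}$ differs from $\gamma$ by a single box). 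Feeding this into the dual Pieri expansion of $E_2^\perp J^\al_\rho$ converts the sum over added boxes, weighted by the square of the eigenvalue difference, into $D(\al)^2p_1-2D(\al)p_1D(\al)+p_1D(\al)^2$, i.e.\ into $\al[D(\al),E_2]$; this is Theorem \ref{thm : JJJ}, and only after it does the coefficient extraction and $z_\la$-bookkeeping you describe in step (iii) take over. Without an analogue of that lemma your outline has no way to trade $\theta^\rho_{n+1}$ at level $n+1$ for $\theta^\gamma_n$ at level $n$, so the argument as proposed does not close.
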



In the special case $\nu = (n)$ the formula (\ref{eq : recnu}) becomes recursive. Besides, as shown in 
Section \ref{sec : add}, the expression in brackets does not depend on $i$. This remarkable property allows us to derive the following theorem. 
\begin{thm}
\label{thm : reca}
For integer $n$ and partition $\la \vdash n+1$, the~Jack connection coefficients verify the following recurrence formula for any $i\in \{1,\ldots, \ell(\la)\}$:
\begin{align} 
\nonumber a_{n+1,n+1}^\la(\al) = (\al-1)(&\la_i-1)a_{nn}^{\la_{\downarrow(\la_i)}}(\al)\\
\label{eq : rec}&+\sum_{d=1}^{\la_i-2}a_{nn}^{\la^{\uparrow(\la_i-1-d,d)}}(\al)+\al\sum_{j\neq i}\la_ja_{nn}^{\la_{\downarrow(\la_i,\la_j)}}(\al).
\end{align} 
\end{thm}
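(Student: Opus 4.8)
The plan is to derive Theorem~\ref{thm : reca} from Theorem~\ref{thm : rec} by specializing $\nu=(n)$ and then extracting the ``$i$-independence'' of the bracketed expression that is promised in the statement. Setting $\nu=(n)$ in \eqref{eq : recnu}, the left-hand sum collapses: the only index $i$ with $m_{i-1}((n))\neq 0$ is $i=n+1$ (since the unique nonzero multiplicity of $(n)$ is $m_n=1$), and $(n)^{\uparrow(n)}=(n+1)$, so the left-hand side becomes simply $(n+1)\,a_{n+1,n+1}^{\la}(\al)$. Thus \eqref{eq : recnu} reads
\begin{equation*}
(n+1)\,a_{n+1,n+1}^{\la}(\al) \;=\; \sum_{i=1}^{\ell(\la)}\la_i\,B_i(\al),
\end{equation*}
where $B_i(\al)$ denotes exactly the bracketed expression on the right of \eqref{eq : rec}, now with $\nu=(n)$ so that every coefficient appearing is of the form $a_{nn}^{?}(\al)$ and the recursion is genuinely recursive (the partitions in the superscripts are partitions of $n$, one size smaller). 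The first step, then, is this bookkeeping reduction.

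The heart of the argument is to show that $B_i(\al)$ does \emph{not} depend on the choice of $i\in\{1,\ldots,\ell(\la)\}$; granting this, $\sum_i \la_i B_i(\al) = B_i(\al)\sum_i\la_i = (n+1)\,B_i(\al)$ for any fixed $i$, and dividing by $n+1$ yields \eqref{eq : rec} immediately. I would prove $i$-independence by induction on $n$. The base cases ($n$ small, using the explicit values in the Examples) are trivial. For the inductive step, fix two indices $i\neq k$ and compare $B_i(\al)$ with $B_k(\al)$; the claim is $B_i=B_k$. Since by the reduction above $(n+1)a_{n+1,n+1}^{\la}(\al)=\sum_j \la_j B_j(\al)$ is a single well-defined quantity, it suffices to show that all the $B_j$ are \emph{equal} rather than to compute them. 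One clean way: use the induction hypothesis to rewrite each $a_{nn}^{\mu}(\al)$ appearing inside $B_i$ (here $\mu\vdash n$, e.g. $\mu=\la_{\downarrow(\la_i)}$, $\la^{\uparrow(\la_i-1-d,d)}$, or $\la_{\downarrow(\la_i,\la_j)}$) via its own instance of \eqref{eq : rec} — legitimate because for partitions of $n$ the formula is already known by induction — expanding $a_{nn}^{\mu}$ as a sum over coefficients $a_{n-1,n-1}^{?}$. After substituting and collecting terms indexed by partitions of $n-1$, both $B_i$ and $B_k$ should reduce to the \emph{same} symmetric combination, because the combinatorial operations $\downarrow(\cdot)$, $\uparrow(\cdot,\cdot)$, $\downarrow(\cdot,\cdot)$ on $\la$ commute appropriately and the asymmetry introduced by distinguishing part $\la_i$ versus part $\la_k$ washes out after one more step of the recursion. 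An alternative, perhaps more in the spirit of Lasalle's \cite{ML08} framework, is to exhibit $B_i(\al)$ directly as the power-sum expansion coefficient of an operator applied symmetrically to $\sum_\gamma J_\gamma^\al(x)^{\otimes}\cdots$, whose $i$-independence is then manifest from the operator identity rather than from a term-by-term match; I would pursue whichever of these the algebraic setup of Section~\ref{sec : alg} makes cleanest.

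The main obstacle I anticipate is precisely the verification that the two expansions of $B_i$ and $B_k$ coincide after unrolling one level of the recursion: one must track carefully how the three types of superscript operations interact when applied in succession (e.g. $(\la_{\downarrow(\la_i,\la_j)})_{\downarrow(\cdot)}$ versus $(\la_{\downarrow(\la_k)})_{\downarrow(\cdot,\cdot)}$), making sure the multiplicities $m_r(\la)$ and the combinatorial prefactors $\la_j$, $(\la_i-1)$, and the $\al$-weights are matched exactly, including the boundary contributions when $\la_i=1$ or $\la_i=2$ (where some sums are empty). This is the kind of finite but intricate case analysis where an off-by-one in a multiplicity or a missed $j=i$ versus $j\neq i$ distinction would break the identity, so I would organize it by the type of the innermost operation and handle the three resulting families of partitions of $n-1$ separately. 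Once $i$-independence is established the theorem follows with no further work.
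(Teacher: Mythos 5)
Your reduction is exactly the paper's: setting $\nu=(n)$ in \eqref{eq : recnu} collapses the left-hand side to $(n+1)a^{\la}_{n+1,n+1}(\al)$ (the only admissible index is $i=n+1$, with coefficient $n+1$), and since $\sum_i\la_i=n+1$ the theorem is equivalent to the $i$-independence of the bracketed expression $B_i$, which the paper likewise proves by induction on $n$. So the strategy matches. The gap is in the one step you defer to ``finite but intricate case analysis'': the claim that after substituting the inductively known recursion for each $a^{\mu}_{nn}(\al)$ inside $B_i$ and $B_k$, the asymmetry ``washes out'' and the two expansions coincide. That is not what happens. Writing $\Theta_i$ for the operator sending $a^{\la}_{n+1,n+1}(\al)$ to $B_i$ and $\Phi_{i,r}$ for $a^{\la}_{n+1,n+1}(\al)\mapsto a^{\la_{\downarrow(i,r)}}_{nn}(\al)$, the paper's Proposition \ref{lem : TiTr} shows that the relevant operators genuinely fail to commute:
\[
\Theta_{r}(\Theta_{i}-\al r\Phi_{i,r}) = \Theta_{i}(\Theta_{r}-\al i\Phi_{i,r}) + \al(i-r)\Theta_{i+r-1}\Phi_{i,r},
\]
with a correction term proportional to $i-r$, so $B_i$ and $B_k$ unrolled to level $n-1$ are not manifestly equal term by term.

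Two ideas are therefore needed that your sketch does not supply. First, the induction hypothesis (that $\Theta_r$ acts as the identity on $a^{\mu}_{nn}(\al)$ whenever $r\in\mu$) can only be invoked after peeling off the single term $\al r\Phi_{i,r}$: this is what guarantees that every partition $\mu\vdash n$ occurring in $(\Theta_i-\al r\Phi_{i,r})a^{\la}_{n+1,n+1}(\al)$ still contains a part $r$, licensing the insertion of $\Theta_r$ in front. Choosing which part of each $\mu$ to recurse on is not a harmless bookkeeping decision but the crux of the argument. Second, the leftover term $\al(i-r)\Theta_{i+r-1}\Phi_{i,r}a^{\la}_{n+1,n+1}(\al)$ is disposed of only by a further application of the induction hypothesis to the \emph{merged} part $i+r-1$ of $\la_{\downarrow(i,r)}$ --- an index that appears nowhere in your inventory of interactions between the operations $\downarrow$ and $\uparrow$. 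In short, the identity closes not because the two expansions match combinatorially but because the statement being proved is used three times at level $n$ (for the parts $r$, $i$, and $i+r-1$) together with the exact commutation identity above; without identifying that identity your plan would stall precisely at the step you flag as the anticipated obstacle.
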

\begin{rem}
One can use the fact that Equation (\ref{eq : rec}) is true for any $i$ to derive additional formulas. For instance, let $\mu$ be a partition of integer $n-1$, the~following expression holds
\begin{equation*}
a_{nn}^{\mu\cup(1)}(\al) = \al (n-1) a_{n-1,n-1}^{\mu}(\al).
\end{equation*}
One can iterate this relation for any partition $\la$ of $n$ that we write $\la = \mu\cup (1^{m_1(\la)})$:
\begin{equation*}
a_{nn}^{\la}(\al) = \al^{m_1(\la)} \frac{(n-1)!}{(n-m_1(\la)-1)!} a_{n-m_1(\la),n-m_1(\la)}^{\mu}(\al).
\end{equation*}
Equation (\ref{eq : rec}) also proves for $\mu \vdash n-2$
\begin{equation*}
a_{nn}^{\mu\cup (2)}(\al) = \al(\al-1) (n-2) a_{n-2,n-2}^{\mu}(\al)+\al\sum_{j}\mu_ja_{n-1,n-1}^{\mu^{\uparrow(\mu_j)}}.
\end{equation*}
\end{rem}
Theorem \ref{thm : reca} allows us to prove the Matchings-Jack conjecture in the case $\mu=\nu=(n)$.

\begin{thm} \label{thm : MJC}
Let $\la$ be a partition of $n$ and $G$ a canonically labelled $\la$-graph. Then there exists a~function $\wt_{\la}\colon \mathcal{G}(\la)\to \{0,1,\ldots,n-1\}$ such that 
$$ a^{\la}_{nn}(\be+1)=\sum_{\de\in\mathcal{G}(\la)} \be^{\wt_{\la}(\de)}$$ and $\wt_{\la}(\de)=0 \iff \de$ is bipartite.

As a consequence, the quantity $a^\la_{nn}(\be+1)$ is a nonnegative integer polynomial in $\be$ with
constant term $c^\la_{nn}=a^\la_{nn}(1)$ and sum of coefficients $\tilde{b}^\la_{nn}=a^\la_{nn}(2)$. Besides
$$[\be^{n-1}]a^\la_{nn}(\be+1) = (n-1)!.$$
\end{thm}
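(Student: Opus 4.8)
The plan is to construct the weight function $\wt_\la$ by induction on $n$, using the recurrence \eqref{eq : rec} of Theorem \ref{thm : reca} as the guide. The base case $n=1$ is trivial: $\mathcal{G}((1))$ has a single (bipartite) matching of weight $0$, and $a^{(1)}_{11}(\be+1)=1$. For the inductive step, fix $\la\vdash n+1$ and choose a convenient index $i$ in \eqref{eq : rec}; the natural choice is to let the $i$-th cycle of the $\la$-graph $G$ be the one containing a distinguished vertex, say the cycle of length $2\la_i$ sitting at the ``end'' of the canonical labelling. The key is that each of the three groups of terms on the right-hand side of \eqref{eq : rec} should be matched, combinatorially, with a class of good matchings $\de\in\mathcal{G}(\la)$ obtained by a local surgery on a good matching of a smaller $\la$-graph: the term $\al\sum_{j\neq i}\la_j a^{\la_{\downarrow(\la_i,\la_j)}}_{nn}$ corresponds to ``merging'' the $i$-th cycle with the $j$-th cycle (decreasing $\ell$ by one, hence the factor $\al=\be+1$ splitting into a bipartite piece and a $\be$-piece); the term $\sum_{d}a^{\la^{\uparrow(\la_i-1-d,d)}}_{nn}$ corresponds to ``splitting'' the $i$-th cycle into two cycles; and the term $(\al-1)(\la_i-1)a^{\la_{\downarrow(\la_i)}}_{nn}$, which carries an explicit factor $\be=\al-1$, corresponds to surgeries that strictly increase the number of non-bipartite edges, contributing matchings of weight one more than their ancestor.

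Concretely, I would set up, for each $\de\in\mathcal{G}(\la)$, a deletion operation that removes the distinguished vertex-pair of the $i$-th cycle and produces a good matching $\de'$ of one of the smaller graphs $G_{\la_{\downarrow(\la_i)}}$, $G_{\la^{\uparrow(\la_i-1-d,d)}}$, or $G_{\la_{\downarrow(\la_i,\la_j)}}$, and check that this deletion is a bijection onto the disjoint union indexed exactly as the right-hand side of \eqref{eq : rec} (the multiplicities $\la_i$, $\la_j$, and the range of $d$ counting the number of ways to reinsert). One then defines $\wt_\la(\de) := \wt_{\la'}(\de') + \epsilon(\de)$, where $\epsilon(\de)\in\{0,1\}$ records whether the reinsertion step was of ``bipartite type'' (contributing to the $1$ in $\al-1+1$, or to the $\al$'s bipartite part) or of ``non-bipartite type'' (contributing the extra $\be$). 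The inductive hypothesis gives $a^{\la'}_{nn}(\be+1)=\sum_{\de'}\be^{\wt_{\la'}(\de')}$, and summing $\be^{\wt_\la(\de)}=\be^{\epsilon(\de)}\be^{\wt_{\la'}(\de')}$ over each class reproduces \eqref{eq : rec} term by term, provided $\epsilon$ has been defined so that the bipartite/non-bipartite dichotomy is respected. The statement $\wt_\la(\de)=0\iff\de$ bipartite then follows because bipartiteness of $\de$ is equivalent to bipartiteness of $\de'$ together with $\epsilon(\de)=0$, and a bipartite good matching deletes to a bipartite one; the range $\{0,\dots,n-1\}$ follows since each deletion lowers $n$ by one and raises the weight by at most one. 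Finally, $[\be^{n-1}]a^\la_{nn}(\be+1)=(n-1)!$: the top-degree coefficient satisfies, from \eqref{eq : rec} with the $\be$-extracting terms, a recurrence whose only surviving contribution at top degree is $(\la_i-1)[\be^{n-2}]a^{\la_{\downarrow(\la_i)}}_{nn}$ plus the $\al\sum_{j\ne i}\la_j$ term read at one degree lower; a short induction (or directly: the maximal-weight matchings are in bijection with a set of size $(n-1)!$, e.g. via the $\al^{m_1}$-reduction in the Remark reducing to $\la$ with no parts equal to $1$) gives the factorial. The consequences about constant term and sum of coefficients are immediate from \eqref{cb} by evaluating at $\be=0$ and $\be=1$.

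The main obstacle is the combinatorial surgery in the second paragraph: one must define the deletion/reinsertion maps on good matchings so that (a) they are genuine bijections with exactly the multiplicities appearing in \eqref{eq : rec}, and (b) the $\epsilon$-labelling correctly tracks the parity of ``non-bipartiteness'' so that bipartite matchings are precisely those of weight $0$. In particular the term $(\al-1)(\la_i-1)a^{\la_{\downarrow(\la_i)}}_{nn}$ is delicate: it has coefficient $\be$, not $\be+1$, so the corresponding good matchings of $G$ must be intrinsically non-bipartite, and one has to argue that the surgery producing them never yields a bipartite result — presumably because the reinsertion forces an edge of the form $\{i,j\}$ or $\{\widehat i,\widehat j\}$. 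Verifying this, and that no good matching of $G$ is produced twice or missed, is where the real work lies; the rest is bookkeeping driven by the algebraic identity \eqref{eq : rec}, which we may assume.
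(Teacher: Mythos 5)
Your plan is essentially the paper's proof: the weight is defined inductively by $\wt_\la(\de)=\wt_{\la'}(\de')+\epsilon$, where $\de=\{\{1,v\}\}\cup\de'$, the reduced matching $\de'$ lives on the smaller graph obtained by the edge-replacement of Section \ref{sec : red}, and $\epsilon=1$ exactly when $v$ is an unhatted vertex (so that the edge $\{1,v\}$ is non-bipartite); the bijection and multiplicities you flag as the main obstacle are precisely those already verified in the proof of Theorem \ref{thm : comb}. The remaining points you anticipate all work out as you describe: the $(\al-1)(\la_1-1)$ term corresponds to $v\in\{2,\ldots,\la_1\}$ and is intrinsically non-bipartite, the factor $\al=\be+1$ in the third term comes from the $2\la_j$ choices of $v$ splitting evenly into hatted ($\epsilon=0$) and unhatted ($\epsilon=1$) vertices, and the top coefficient is $\bigl((\la_1-1)+\sum_{j>1}\la_j\bigr)(N-1)!=N!$.
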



For indeterminate $x^1, x^2,\ldots,x^s$ define the more general Jack connection coefficients
\begin{equation}
\label{eq : defGen}
\sum_{\la^i \vdash n}\al^{-\ell(\la^1)}z_{\la^1}^{-1}a_{\la_2,\ldots,\la_s}^{\la^1}(\al)\prod_ip_{\la^i}(x^i) = \sum_{\gamma \vdash n}\frac{\prod_iJ^{\al}_\gamma(x^i)}{\langle J^{\al}_\gamma,J^{\al}_\gamma\rangle_{\al}}
\end{equation} 
and denote for any integer partition $\la \vdash n$ and integer $l,r\geq 0$ $$a_\la^{l,r}(\al) = a_{\underbrace{(n),\ldots,(n)}_{l},\underbrace{[1^{n-2}2],\ldots,[1^{n-2}2]}_{r}}^{\la}(\al).$$
We show some polynomial properties for the coefficients $a_\la^{l,r}(\al)$.
\begin{thm}
\label{thm : GenCoeff}
Let $D(\al)$ denote the Laplace Beltrami operator (see Equation (\ref{eq : D})), $\Delta_0(\al) =p_1/\al$ and $\Delta_k(\al) = [D(\al),\Delta_{k-1}(\al)]$. The coefficients $a_\la^{l,r}(\al)$ defined above verify
\begin{equation}
\al^{-\ell(\la)}|C_\la|a_\la^{l,r}(\al) = [p_\la]{D(\al)}^r{\Delta_l(\al)}^{n-1}({p_1}/{\al})
\end{equation}
and for $l \geq 2$, $|C_\la|a_\la^{l,r}(\al)$ is a polynomial in $\al$ with integer coefficients of degree at most $(n-1)(l-1)+r$. Furthermore
\begin{equation*}
[\al^i]|C_\la|a_\la^{l,r} (\al) = (-1)^{(l-1)(n-1)+r+\ell(\la)-1}[\al^{(l-1)(n-1)+r+\ell(\la)-1-i}]|C_\la|a_\la^{l,r} (\al).
\end{equation*}
\end{thm}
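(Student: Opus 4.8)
The plan is to derive the three assertions from Lassalle's operator calculus, in the order: the closed formula, then polynomiality and the degree bound, then the $\al\leftrightarrow 1/\al$ symmetry.

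\emph{Closed formula.} I would work in the Jack basis and use three classical facts: $D(\al)$ of (\ref{eq : D}) is diagonal, $D(\al)J^\al_\ga=\vartheta_\ga(\al)J^\al_\ga$, where $\vartheta_\ga(\al):=\sum_{s\in\ga}\big(\al\,a'(s)-l'(s)\big)$ is also the Jack character $[p_{[1^{n-2}2]}]J^\al_\ga$; the $n$-cycle character is $[p_{(n)}]J^\al_\ga=\prod_{s\in\ga\setminus\{(1,1)\}}\big(\al\,a'(s)-l'(s)\big)$; and $[p_{(1^n)}]J^\al_\ga=1$, whence $[J^\al_\ga]\,p_1^n=\al^n n!/j_\ga(\al)$. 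By induction on $l$, using $\Delta_l(\al)=[D(\al),\Delta_{l-1}(\al)]$ and the diagonality of $D(\al)$, one gets $\Delta_l(\al)J^\al_\ga=\al^{-1}\sum_{\ga^+}\kappa_{\ga,\ga^+}\big(\vartheta_{\ga^+}(\al)-\vartheta_\ga(\al)\big)^l J^\al_{\ga^+}$, the sum over partitions $\ga^+$ obtained from $\ga$ by adding one box, with $\kappa_{\ga,\ga^+}$ the coefficients of $p_1J^\al_\ga=\sum_{\ga^+}\kappa_{\ga,\ga^+}J^\al_{\ga^+}$. Iterating $n-1$ times from $p_1/\al=\al^{-1}J^\al_{(1)}$ and noting that along any saturated chain from $(1)$ to $\ga$ the successive differences $\vartheta_{\ga^+}(\al)-\vartheta_\ga(\al)$ are the $\al$-contents $\al\,a'(\square)-l'(\square)$ of the boxes $\square\in\ga\setminus\{(1,1)\}$ — so that $\prod(\vartheta_{\ga^+}-\vartheta_\ga)=[p_{(n)}]J^\al_\ga$ independently of the chain — while $\sum_{\text{chains}}\prod\kappa=[J^\al_\ga]p_1^n=\al^n n!/j_\ga(\al)$, one obtains $\Delta_l(\al)^{n-1}(p_1/\al)=n!\sum_\ga j_\ga(\al)^{-1}\big([p_{(n)}]J^\al_\ga\big)^l J^\al_\ga$. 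Applying $D(\al)^r$ multiplies the $\ga$-term by $\vartheta_\ga(\al)^r=\big([p_{[1^{n-2}2]}]J^\al_\ga\big)^r$, and taking $[p_\la]$ gives $n!\sum_\ga j_\ga(\al)^{-1}\,[p_\la]J^\al_\ga\big([p_{(n)}]J^\al_\ga\big)^l\big([p_{[1^{n-2}2]}]J^\al_\ga\big)^r$. Expanding both sides of (\ref{eq : defGen}) with $s=l+r+1$ variables in power sums, using $\langle p_\mu,p_\mu\rangle_\al=\al^{\ell(\mu)}z_\mu$, $\langle J^\al_\ga,J^\al_\ga\rangle_\al=j_\ga(\al)$ and $|C_\la|=n!/z_\la$, produces exactly the same expression for $\al^{-\ell(\la)}|C_\la|a_\la^{l,r}(\al)$; this is the first assertion.

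\emph{Polynomiality and degree.} Since $D(\al)$ is a differential operator polynomial in $\al$, the iterated commutators $\Delta_l(\al)$, built from $D(\al)$ and multiplication by $p_1/\al$, have matrix coefficients whose only pole in $\al$ is at $0$; hence $F(\al):=D(\al)^r\Delta_l(\al)^{n-1}(p_1/\al)$ — and therefore $|C_\la|a_\la^{l,r}(\al)=\al^{\ell(\la)}[p_\la]F(\al)$ — has no pole at any $\al\ne0$. The pole at $0$ is absorbed by $\al^{\ell(\la)}$: from $j_\ga(\al)=h_\ga(\al)h'_\ga(\al)$ the order of vanishing at $\al=0$ of $j_\ga$ is $\ga_1$, that of $\big([p_{(n)}]J^\al_\ga\big)^l$ is $l(\ga_1-1)$, and $[p_\la]J^\al_\ga$ and $\vartheta_\ga(\al)$ are polynomials in $\al$, so each $\ga$-summand of $\al^{\ell(\la)}[p_\la]F(\al)$ vanishes at $0$ to order at least $\ell(\la)-1\ge0$; thus $|C_\la|a_\la^{l,r}(\al)\in\mathbb{Q}[\al]$. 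Similarly at $\al=\infty$, using $\deg_\al h_\ga=n-\ell(\ga)$, $\deg_\al h'_\ga=n$, $\deg_\al[p_{(n)}]J^\al_\ga=n-\ell(\ga)$, $\deg_\al\vartheta_\ga(\al)\le1$ and the standard bound $\deg_\al[p_\la]J^\al_\ga\le n-\ell(\la)$, the $\ga$-summand has $\al$-degree at most $(l-1)(n-\ell(\ga))+r$, which for $l\ge2$ is maximised (uniquely) by $\ga=(n)$ at $(l-1)(n-1)+r$; so $\deg_\al|C_\la|a_\la^{l,r}(\al)\le(l-1)(n-1)+r$. The one point I expect to be genuinely delicate is that the coefficients are \emph{integers}: the closed formula only exhibits a rational function, and I would establish integrality by tracking, along the induction above, that the denominators (in particular the $2$-torsion) in the power-sum form of $D(\al)$ are cleared by the factors $\al^{\ell(\la)}$ and $n!/z_\la$ — or, failing a direct argument, bootstrap it from the already-established case $l=2$, $r=0$ (Theorem \ref{thm : MJC}) through the recurrences the closed formula produces.

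\emph{Symmetry.} I would use the involution $\omega_\al$ of $\Lambda$ with $\omega_\al(p_k)=(-1)^{k-1}\al\,p_k$, an isometry $(\Lambda,\langle\cdot,\cdot\rangle_\al)\to(\Lambda,\langle\cdot,\cdot\rangle_{1/\al})$ sending $J^\al_\ga$ to a scalar multiple of $J^{1/\al}_{\ga'}$. Comparing eigenvalues (conjugation of $\ga$ exchanges $a'(s)\leftrightarrow l'(s)$) gives $\omega_\al D(\al)\omega_\al^{-1}=-\al\,D(1/\al)$ and $\omega_\al\circ(p_1\,\cdot\,)\circ\omega_\al^{-1}=\al\,(p_1\,\cdot\,)$, hence by induction $\omega_\al\Delta_l(\al)\omega_\al^{-1}=(-1)^l\al^{l-1}\Delta_l(1/\al)$ for $l\ge1$, and therefore $\omega_\al\big(F(\al)\big)=(-1)^{r+l(n-1)}\al^{(l-1)(n-1)+r-1}F(1/\al)$. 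Taking $[p_\la]$ of both sides, using $[p_\la]\omega_\al(g)=(-1)^{n-\ell(\la)}\al^{\ell(\la)}[p_\la]g$ and the closed formula, yields $|C_\la|a_\la^{l,r}(\al)=(-1)^M\al^M|C_\la|a_\la^{l,r}(1/\al)$ with $M=(l-1)(n-1)+r+\ell(\la)-1$; since $|C_\la|a_\la^{l,r}$ is a polynomial by the previous step, comparing coefficients of $\al^i$ gives exactly $[\al^i]|C_\la|a_\la^{l,r}(\al)=(-1)^M[\al^{M-i}]|C_\la|a_\la^{l,r}(\al)$, the last assertion. The main obstacle, as noted, is the integrality statement in the second step; the closed formula of the first step and the symmetry of the third are then clean consequences of Lassalle's framework.
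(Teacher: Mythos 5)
Your proposal is correct and reaches all three assertions, but it diverges from the paper in two of the three steps, so a comparison is worthwhile. For the closed formula you and the paper do essentially the same thing: your explicit chain computation in Young's lattice, with the successive eigenvalue differences $\vartheta_{\ga^+}(\al)-\vartheta_\ga(\al)$ equal to the $\al$-contents of the added boxes, is exactly the content of the paper's Lemma \ref{lem : theta} combined with the Pieri rule; the paper packages it as the recursion $\Gamma^l_n=\frac1n\Delta_l(\al)\Gamma^l_{n-1}$ proved by iterating the argument of Section \ref{sec : rtsfjsf}. For polynomiality and the degree bound the paper argues entirely on the operator side: expanding $\Delta_l(\al)$ as the $(l-1)$-fold commutator of $D(\al)$ with $E_2$ (Equation (\ref{eq : DeltaLE})) and using Lemma \ref{lem : dp}, the coefficients of $D(\al)^r\Delta_l(\al)^{n-1}(p_1)$ in the power-sum basis are visibly integer polynomials of degree at most $(n-1)(l-1)+r$, and the single $\al^{-1}$ coming from $p_1/\al$ is absorbed by $\al^{\ell(\la)}$. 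This is precisely the "direct argument" you defer to for integrality, and it does close: the point you worried about (the $\tfrac12$ factors in $N$, $U$, $S$) is harmless because $\tfrac12 i(i-1)$ and $\tfrac12 i^2m_i(m_i-1)$ are integers and the off-diagonal terms come in symmetric pairs, so no clearing by $n!/z_\la$ is needed — only by $\al^{\ell(\la)}$; your alternative of bootstrapping from $l=2$, $r=0$ would be much more awkward and is not needed. Your eigenvalue-by-eigenvalue degree count at $\al\to\infty$ is a valid alternative route to the bound $(n-1)(l-1)+r$ (the paper instead truncates using the symmetry relation), though it leans on the external bound $\deg_\al\theta^\ga_\la\leq n-\ell(\la)$. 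Finally, for the symmetry the paper simply cites \cite[Thm.~5]{V2014}, whereas you rederive the functional equation $|C_\la|a^{l,r}_\la(\al)=(-1)^M\al^M|C_\la|a^{l,r}_\la(1/\al)$ from the involution $\omega_\al$; your computation is correct (and the key conjugation $\omega_\al D(\al)\omega_\al^{-1}=-\al D(1/\al)$ can even be checked directly on the decomposition $D(\al)=(\al-1)N+\al U+S$ of Lemma \ref{lem : dp}, avoiding any appeal to the duality of Jack polynomials), so your treatment of this step is self-contained where the paper's is not.
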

Equation (\ref{eq : rec}) admits a nice combinatorial interpretation in terms of permutations and graphs in the special cases $\al =1,\,2$ that provided us with the intuition for the general case. These bijections are described in Section \ref{sec : bij}. Using the theory of Jack symmetric functions, we prove Theorem \ref{thm : rec} in Section \ref{sec : alg}. Section \ref{sec : add} provides additional results including the proof of Theorem \ref{thm : reca}, the Matchings-Jack conjecture of Theorem \ref{thm : MJC} and the proof of Theorem \ref{thm : GenCoeff}. 


\section{Background}
Our work is mainly related to two areas of combinatorics, namely the computation of classical connection coefficients of group algebra of the symmetric group and the study of the coefficients in the power sum expansions of Jack symmetric functions. Relevant background is described in the two following sections. 
\subsection{Computation of connection coefficients}
Except for special cases no closed formulas are known for the coefficients  $c^{\la^1}_{\lambda^2,\ldots,\la^s}$ and $b^{\la^1}_{\lambda^2,\ldots,\la^s}$. Using an inductive argument B\'{e}dard and Goupil  \cite{BG} first found a formula for $c^n_{\lambda,\mu}$ in the case $\ell(\la)+\ell(\mu)=n+1$, which was later reproved by Goulden and Jackson \cite{GJ92} via a bijection with a set of ordered rooted bicolored trees. Later, using characters of the symmetric group and a combinatorial development, Goupil and Schaeffer \cite{GS} derived an expression for connection coefficients of arbitrary genus as a sum of positive terms (see Biane \cite{PB} for a succinct algebraic derivation; and Poulalhon and Schaeffer \cite{PS}, and Irving \cite{JI} for further generalizations). 
Closed form formulas of the expansion of the generating series for the $c^{(n)}_{\lambda^2,\ldots,\la^s}$ (for general $s$) and $b^{(n)}_{\lambda,\mu}$ in the monomial basis were provided by Morales and Vassilieva and Vassilieva in \cite{MV09}, \cite{MV11}, \cite{V2012} and \cite{V2013}. Papers \cite{MV11} and \cite{V2013} use the topological interpretation of $c^{(n)}_{\lambda^2,\ldots,\la^s}$ and $b^{(n)}_{\lambda,\mu}$ in terms of unicellular locally orientable hypermaps and constellations of given vertex degree distribution. Jackson (\cite{DMJ}) computed a general expression for the generating series of the $\sum_{\ell(\la_i) = p_i}c^{\la^1}_{\lambda^2,\ldots,\la^s}$ in terms of some explicit polynomials.

\subsection{Jack characters}
The link between Schur (resp. zonal) polynomials and irreducible characters of the symmetric group (resp. zonal spherical functions) is given by the decomposition of the $s_\la$ (resp. $Z_\la$) in the power sum basis
\begin{align}
\label{eq : characters}s_\la &= \sum_{\mu \vdash n}z_\mu^{-1}\chi^\la_\mu p_\mu,\\
\label{eq : spherical}Z_\la &= \frac{1}{| B_n |}\sum_{\mu \vdash n}\varphi^\la_\mu p_\mu
\end{align}
where $\chi^\la_\mu$ is the value of the irreducible character of the symmetric group $\chi^\la$ indexed by integer partition $\la$ at any element of $C_\mu$ and $\varphi^\la_\mu = \sum_{\omega \in K_\mu}\chi^{2\la}(\omega)$. The value of the zonal spherical function indexed by $\lambda$ of the Gelfand pair $(S_{2n}, B_n)$ at the elements of the double coset $K_\mu$ is given by $| K_\mu |^{-1} \varphi^{\la}_\mu$.
Using the fact that both Schur and zonal symmetric functions are special cases of Jack symmetric functions, it is natural to focus on a more general form of Equations (\ref{eq : characters}) and (\ref{eq : spherical}). Formally, let $\theta_\mu^\la(\alpha)$ denote the coefficient of $p_\mu$ in the expansion of $J^\alpha_\la$ in the power sum basis:
\begin{equation}
\label{eq : jackchar}J^\alpha_\la = \sum_{\mu}\theta_\mu^\la(\alpha)p_\mu.
\end{equation}
According to Equations (\ref{eq : characters}) and (\ref{eq : spherical}), up to a normalization factor, the $\theta_\mu^\la(\alpha)$'s coincide with the irreducible characters of the symmetric group and the zonal spherical functions in the cases $\alpha =1$ and $\alpha = 2$. In the general case Do\l \k{e}ga and F\'eray \cite{DF} named them {\bf Jack characters}.\\
The coefficients in the power sum expansion of Jack symmetric functions have received significant attention over the past decades. As an example, Hanlon conjectured a first combinatorial interpretation for them in \cite{H88} in terms of digraphs. Stanley proved various results for these coefficients in \cite{S89}. For instance for simple values of $\mu$ the following formulas are fulfilled:
\begin{align}
&\theta_{[1^n]}^\la(\alpha) = 1,\\
&\label{eq : theta_21}\theta_{[1^{n-2}2^1]}^\la(\alpha) =  \sum_{s \in \la}(\al a'(s)-l'(s)),\\
&\label{eq : theta_n}\theta_{(n)}^\la(\alpha) = \prod_{s \in \la \setminus\{(1,1)\}}(\al a'(s)-l'(s)).
\end{align}

More recent works by Lassalle (see e.g. \cite{ML08} and \cite{ML09}) reconsidered these coefficients as generalizations of irreducible characters of the symmetric group and conjectured various polynomial properties in $\al$ for the $\theta_\mu^\la(\alpha)$. Furthermore Lassalle showed in \cite{ML08} some recursive formulas for $\vartheta_\mu^\la = z_\mu\theta_{\mu,1^{n-k}}^\la(\alpha)$ where $\la \vdash n$ and $\mu \vdash k$ with $m_1(\mu) = 0$. Namely,
\begin{align*}
&\sum_{r,s}rs(m_r(\mu)(m_s(\mu) -\delta_{rs})\vartheta^\la_{\mu_{\downarrow(r,s)}} +(\al-1)\sum_{r}^{\bullet}r(r-1)\vartheta^\la_{\mu_{\downarrow(r)}}+\\
&\al\sum_{r}^{\bullet}rm_r(\mu)\sum_{i=1}^{r-2}\vartheta^{\la}_{\mu^{\uparrow(i,r-i-1)}} = -(n+k)\vartheta^{\la}_{\mu}+\al\sum_{i=1}^{\ell(\la)+1}c_i(\la)(\la_i-(i-1)/\al)^2\vartheta_{\mu}^{\la^{(i)}}
\end{align*}
where the symbol $\sum^{\bullet}$ indicates some additional factors in limit conditions (see \cite{ML08}), the numbers $c_i(\la)$ are defined in Equation (\ref{eq : cdef}) and $\la^{(i)}$ is defined in Section \ref{sec : lbogbcpf}.

While the formulas in Theorems \ref{thm : rec} and \ref{thm : reca} share some similarities with the above recurrence formula, they cannot be derived from it.

Do\l \k{e}ga and F\'eray \cite{DF} proved that Jack characters are polynomials in $\al$ with rational coefficients. Together with P. \'Sniady \cite{DFS} they conjectured an expression involving a measure of "non-orientability" of  locally orientable hypermaps (an expression also introduced in \cite{GJ96}). Do\l \k{e}ga and F\'eray also proved in \cite{DF14} that the $a^\la_{\mu\nu}(\al)$ are polynomials in $\al$ with {\bf rational} coefficients.
\begin{rem}
Using Equations (\ref{eq : jackchar}) and (\ref{eq : jla}), one can reformulate Jack connection coefficients as
\begin{equation}
a^{\la^1}_{\la_2,\ldots,\la_s}(\al) = \al^{\ell(\la^1)}z_{\la^1}\sum_{\gamma \vdash n}\frac{\prod_i \theta^{\gamma}_{\la^i}(\al)}{j_\gamma(\al)}.   
\end{equation}
We use this formulation in the following sections.
\end{rem}


\section{Graph interpretation}
\label{sec : bij}

In this section we obtain recursive formulas for classical connection coefficients $\tilde{b}^\la_{nn}$ and $c^\la_{nn}$ (see equalities (\ref{cb})) using their 
combinatorial consideration described by Equation (\ref{combintn}). The recursion is obtained by removing and replacing edges
in graphs. Note that this operation is also considered in \cite{DFS} in terms of maps (graphs embedding in an orintable or a non-orientable 
surface).

\subsection{Reduction of $\la$-graphs by replacing of edges} \label{sec : red}

Consider a $\la$-graph $G$ consisiting of cycles $C_{2\la_1}$, $C_{2\la_2}, \ldots$

Let $a$ and $b$ are non-neighbor vertices of $G$. Let $a_1, a_2$ are neighbors of $a$ and $b_1,b_2$ are neighbors of $b$ such that 
$\{a,a_1\}$, $\{b,b_1\}$ are gray edges and $\{a,a_2\}$, $\{b,b_2\}$ are black edges. Replace the edges $\{a,a_1\}$ and $\{b,b_1\}$
by the gray edge $\{a_1,b_1\}$ and, similarly, replace the edges $\{a,a_2\}$ and $\{b,b_2\}$ by the black edge $\{a_2,b_2\}$.
Then the $\la$-graph $G$ transforms into a $\la'$-graph $G'$ where
\begin{equation} \label{red} \la':=\left\{ \begin{aligned} &\la_{\downarrow{(\la_i)}} &\text{ if }  a,b \in C_{2\la_i} & 
\text{ and the number of vertices} \\ & & & \text{ between $a$ and $b$ is odd,}\\
& \lambda^{\uparrow{(\lambda_i-1-d,d)}} & \text{ if }   a,b \in C_{2\la_i} & \text { and the number of vertices} \\ & & &
\text{ between $a$ and $b$ is equal to $2d-2$,} \\  
& \la_{\downarrow{(\lambda_i, \lambda_j)}} &\text{ if } a \in C_{2\la_i},\;\, &b\in C_{2\la_j},\,j\ne i.  \end{aligned} \right. 
\end{equation}

We show all three cases on Figures \ref{FIGa}--\ref{FIGc} where we label vertices of the cycle $C_{2\la_i}$ 
by $1,\widehat{1},\ldots, \la_i, \widehat{\la_i}$ and put $a=1$.

\begin{figure}[htbp]\begin{center} \includegraphics[scale=0.6]{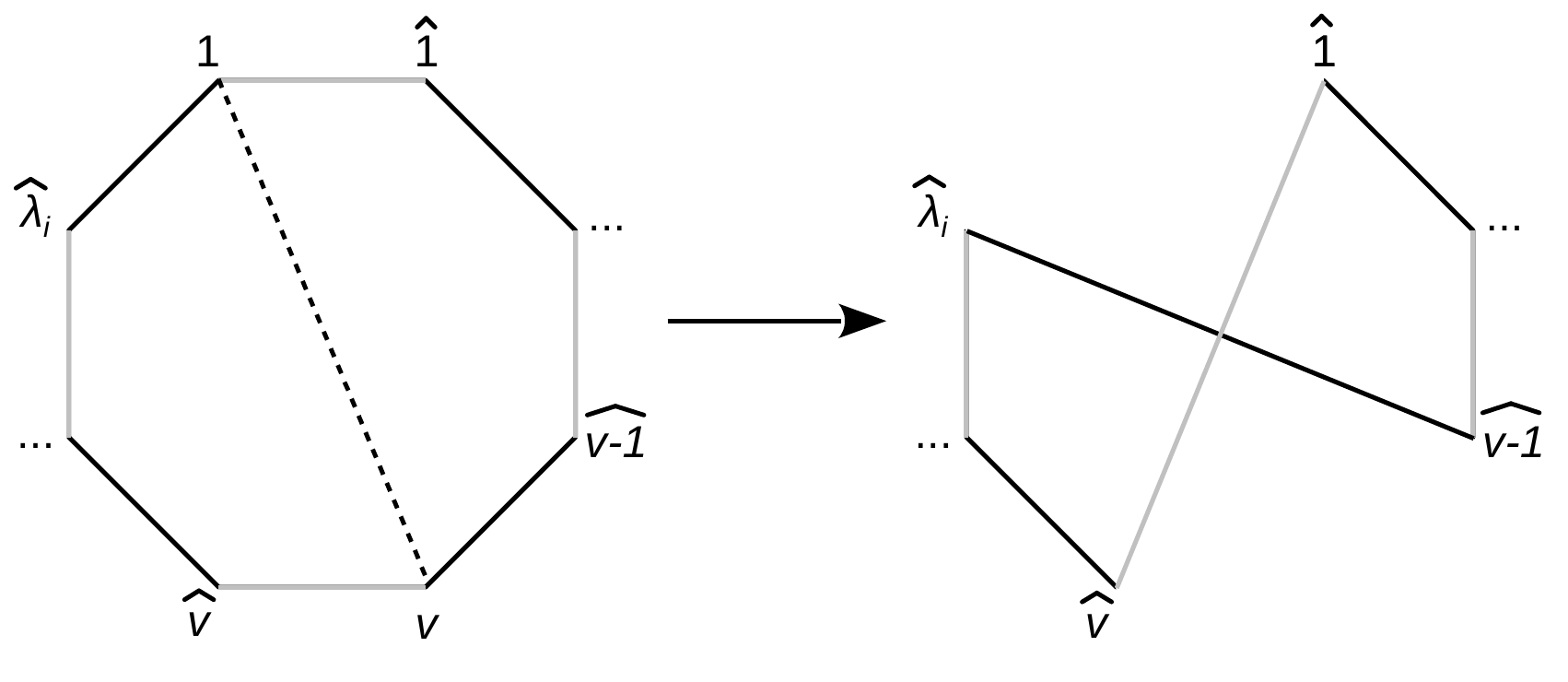} \caption{The replacing of edges transforms a $\la$-graph into a $\la_{\downarrow(\la_i)}$-graph.} \label{FIGa}\end{center} \end{figure}

\begin{figure}[htbp]\begin{center} \includegraphics[scale=0.6]{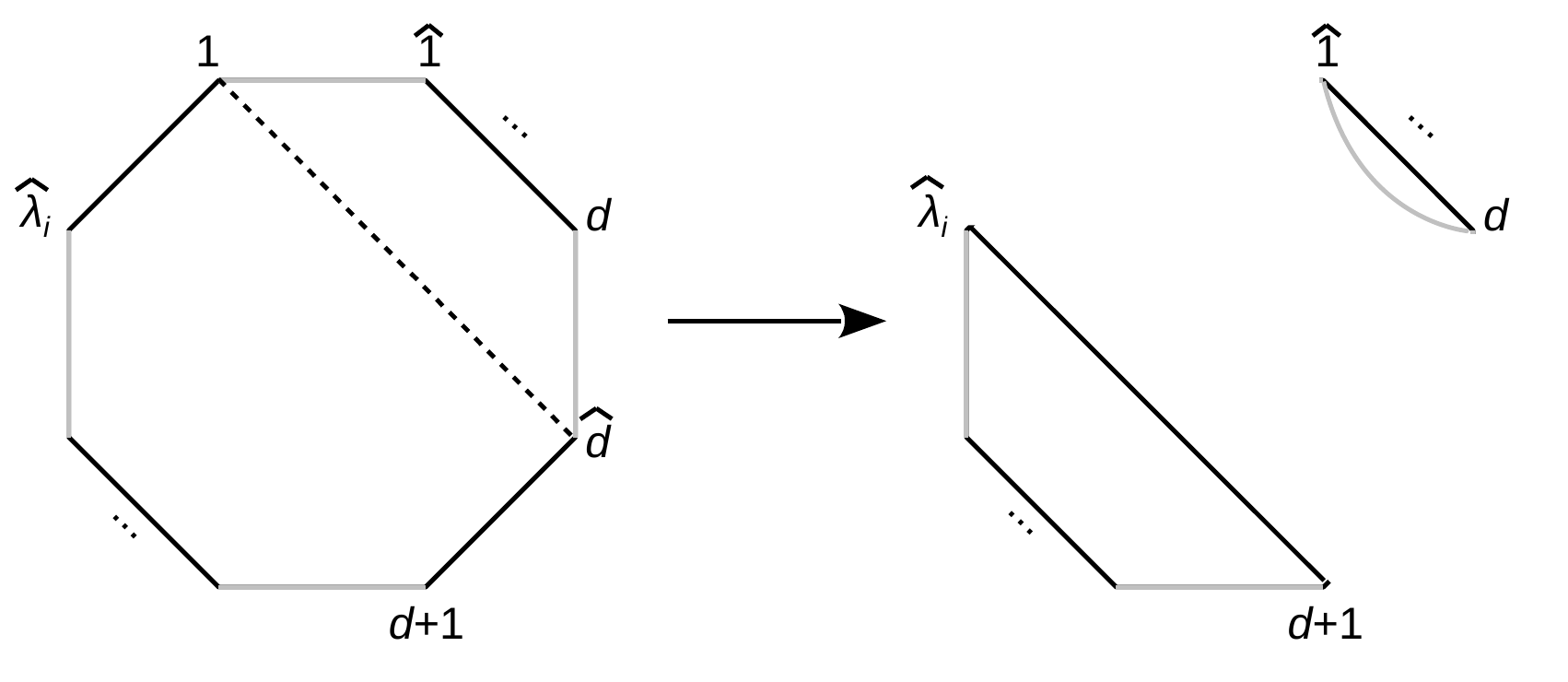} \caption{The replacing of edges transforms a $\la$-graph into a $\la^{\uparrow(\la_i-1-d,d)}$-graph.} \label{FIGb} \end{center}\end{figure}

\begin{figure}[htbp] \begin{center} \includegraphics[scale=0.6]{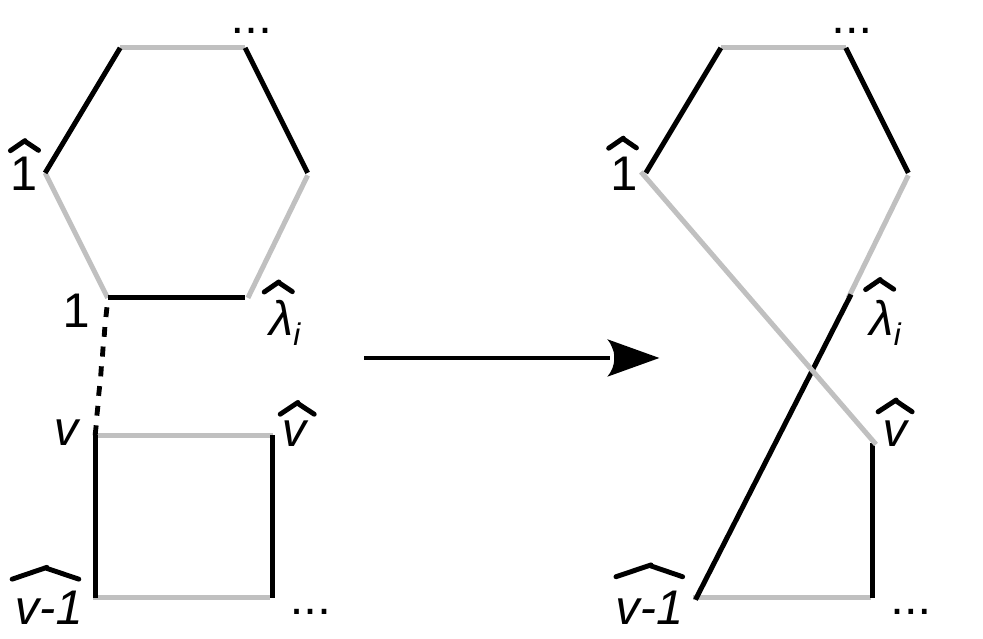}\caption{The replacing of edges transforms a $\la$-graph into a $\la_{\downarrow(\la_i,\la_j)}$-graph.}  \label{FIGc} \end{center} \end{figure}

\begin{rem} These three cases correspond to classification of edges of maps considering in \cite{DFS}. One can translate the terminology of pure graphs used by us to terms of maps. ly, the~case~1 corresponds to twisted edges from \cite{DFS}, the~case~2 corresponds to straight edges and the case~3 corresponds to interface edges. Besides bipartite matchings correspond to the case of orientable surfaces (in~particaular, twisted edges are not possible in this case).
\end{rem}

\subsection{Proof of Theorem \ref{thm : rec} in the case $\al \in \{1,2\}$}

\begin{thm} \label{thm : comb}
Let $\la$ be a partition of integer $n+1\geq 2$ and $G$ a canonically labeled $\la$-graph.
Then coefficients (\ref{cb}) satisfy the following recurrence formulas for any $i\in \{1,\ldots, \ell(\la)\}\colon$  
\begin{align} \label{comb2}  &\tilde{b}^{\lambda}_{n+1,n+1}=
(\lambda_i-1)\tilde{b}^{\lambda_{\downarrow{(\la_i)}}}_{nn}+\sum_{d=1}^{\lambda_i-2}\tilde{b}^{\lambda^{\uparrow{(\lambda_i-1-d,d)}}}_{nn}+
2\sum_{j\ne i} \lambda_j \tilde{b}^{\lambda_{\downarrow{(\lambda_i, \lambda_j)}}}_{nn}, \\
 \label{comb1}  &c^{\lambda}_{n+1,n+1}=\sum_{d=1}^{\lambda_i-2}c^{\lambda^{\uparrow{(\lambda_i-1-d,d)}}}_{nn}+\sum_{j\ne i} \lambda_j c^{\lambda_{\downarrow{(\lambda_i, \lambda_j)}}}_{nn}. \end{align} 
\end{thm}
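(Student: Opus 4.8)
The plan is to prove \eqref{comb2} and \eqref{comb1} combinatorially using the interpretation \eqref{combintn} of $\tilde b^\la_{nn}$ and $c^\la_{nn}$ as (bipartite) good matchings, by setting up a bijection based on the edge-replacement operation of Section \ref{sec : red}. Fix $i\in\{1,\dots,\ell(\la)\}$ and let $G$ be the canonically labelled $\la$-graph with $\la\vdash n+1$; write the cycle $C_{2\la_i}$ with vertices $1,\widehat 1,\dots,\la_i,\widehat{\la_i}$ so that $a=1$ is a distinguished vertex. Given a good matching $\de\in\mathcal G(\la)$, the black edge $\{1,\widehat{\la_i}\}$ and the gray edge $\{1,\widehat 1\}$ of $G$ together with the $\de$-edges incident to $1$ single out a second vertex $b$, namely the $\de$-partner of $1$. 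Removing vertex $1$ (and $\widehat 1$, $\widehat{\la_i}$) via the replacement of Section \ref{sec : red} produces a graph on $2n$ vertices, and the key claim is that the induced matching $\de'$ on it is again good, i.e.\ $\de'\in\mathcal G(\la')$ with $\la'$ determined by \eqref{red}. First I would verify this claim: since $\mathbf b\cup\de$ and $\mathbf g\cup\de$ are $(2n+2)$-cycles, deleting the length-$2$ "ear" at vertex $1$ and reconnecting keeps each of $\mathbf b'\cup\de'$ and $\mathbf g'\cup\de'$ a single cycle of length $2n$, so $\la'$ must have the form $(n)$ on the $\de$-side — and the underlying graph type is exactly the $\la'$ of \eqref{red} according to the position of $b$ relative to $a$ in $G$.

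Next I would organize the sum on the right-hand side as a sum over the three cases of \eqref{red}, i.e.\ over the possible locations of $b$. If $b$ lies in $C_{2\la_i}$ with an odd number of vertices strictly between $a$ and $b$ on the cycle, we land in a $\la_{\downarrow(\la_i)}$-graph; there are $\la_i-1$ such positions for $b$ (the vertices $\widehat 1,2,\widehat 2,\dots$ at odd cyclic distance, excluding neighbours of $1$), giving the term $(\la_i-1)\tilde b^{\la_{\downarrow(\la_i)}}_{nn}$. If $b\in C_{2\la_i}$ with $2d-2$ vertices between $a$ and $b$, we get a $\la^{\uparrow(\la_i-1-d,d)}$-graph, and $d$ ranges over $1,\dots,\la_i-2$, producing $\sum_d \tilde b^{\la^{\uparrow(\la_i-1-d,d)}}_{nn}$. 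If $b\in C_{2\la_j}$ with $j\ne i$, we obtain a $\la_{\downarrow(\la_i,\la_j)}$-graph, and there are $2\la_j$ choices of $b$ in that cycle, giving $2\la_j\,\tilde b^{\la_{\downarrow(\la_i,\la_j)}}_{nn}$. To conclude \eqref{comb2} I must exhibit the inverse map: given $\de'$ a good matching of a $\la'$-graph of one of these three types, together with the recorded position data, I reinsert vertex $1$ together with its gray neighbour $\widehat 1$ and black neighbour $\widehat{\la_i}$ by reversing the replacement, splitting the chosen gray edge $\{a_1,b_1\}$ into $\{1,\widehat 1\}$ and $\{1,\widehat{\la_i}\}$ appropriately and similarly for black, and connecting $1$ to $b$ by a $\de$-edge; one checks this recovers a good matching in $\mathcal G(\la)$ and is inverse to the reduction. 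For \eqref{comb1}, I would run the same argument restricted to bipartite matchings, observing that the reduction and its inverse preserve bipartiteness; the case-1 term disappears because when $a,b$ lie at odd cyclic distance on the same cycle, the $\de$-edge $\{1,b\}$ necessarily joins two hatted or two unhatted vertices, which is impossible for a bipartite matching (this is exactly the "twisted edge" remark), and the case-3 multiplicity drops from $2\la_j$ to $\la_j$ because bipartiteness forces $b$ to be of the opposite hat-parity to $1$, halving the admissible positions.

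The main obstacle I anticipate is the careful bookkeeping in the inverse map: one must check that the data retained (which of the three cases, the value of $d$ or the index $j$, and the precise vertex $b$) is exactly enough to reconstruct $\de$ uniquely, and in particular that after reinsertion the canonical labelling is respected and the two reconnected edges are assigned the correct colours so that $\mathbf b\cup\de$ and $\mathbf g\cup\de$ are genuinely the $(2n+2)$-cycles of a $(n)$-graph rather than splitting into two cycles. Verifying that the "ear deletion" never disconnects the cycle — equivalently that $1$ and $b$ are never adjacent on $\mathbf b\cup\de$ in a way that would create a $2$-cycle after contraction — requires using that $a$ and $b$ are non-neighbours in $G$, which is automatic here since the $\de$-partner of a vertex is never one of its $G$-neighbours in a good matching when $n\ge 1$; the degenerate small cases ($\la_i=1$, where no reduction inside $C_{2\la_i}$ is possible, and $\la_i=2$, where only case 1 or case 3 occurs) should be checked separately against the convention that empty sums vanish. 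Figures \ref{FIGa}–\ref{FIGc} provide the local pictures that make each case transparent.
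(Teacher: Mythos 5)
Your proposal is correct and follows essentially the same route as the paper: fix the vertex $1$ of the $2\la_i$-cycle, classify the good matchings $\de$ by the position of the $\de$-partner $v$ of $1$, apply the edge-replacement reduction of Section \ref{sec : red} to land in the three cases of (\ref{red}), and count the admissible positions of $v$ as $\la_i-1$, $1$ (for each $d$), and $2\la_j$ respectively, with case 1 excluded and the case-3 multiplicity halved to $\la_j$ for bipartite matchings. The only small slip is that the reduction deletes the two vertices $1$ and $v$ (not $\widehat{1}$ and $\widehat{\la_i}$, which survive and are reconnected by the new gray and black edges), but this does not affect your case analysis or the resulting counts.
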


\begin{proof} One can consider that vertices of $2\la_i$-cycle are labeled by elements 
$1,\widehat{1},\ldots,$ $\la_i,\widehat{\la_i}$. In order to prove formula (\ref{comb2}) fix any vertex of $2\la_i$-cycle, 
say $1$. All $\tilde{b}^{\lambda}_{n+1,n+1}$ good matchings $\de$ are splited into three groups depending on the position of 
the vertex $v$ which is connected with $1$ in $\de$. Write $\delta=\{\{1,v\}\}\cup \delta'$ and denote by $G'$ a $\la'$-graph
obtained from $G$ by the replacing of edges described in Section \ref{sec : red} ($\la'$ is defined as in (\ref{red}) where
$a=1$ and $b=v$). It is clear that the matching $\de$ is good for $G$ if and only if the matching $\de'$ is good for $G'$.
Indeed, $(n+1)$-cycles {\bf g}$\cup\de$ and {\bf b}$\cup\de$ transform into $n$-cycles {\bf g}$'\cup\de'$ and {\bf b}$'\cup\de'$
respectively, where {\bf g}$'$ and {\bf b}$'$ are the sets of gray and black edges of the new $\la'$-graph $G'$.
Consider three cases as above.
\vskip 3 pt

{\bf Case 1:} $v\in \{2,\ldots,\la_i\}$ (this case is possible if $\la_i\geq 2$). See Figure \ref{FIGa}. 

As $\la'=\la_{\downarrow{(\la_i)}}$ and $v$ runs the set of $\la_i-1$ vertices in this case, then the number of such matchings 
$\de$ is equal to $(\la_i-1) \tilde{b}^{\la_{\downarrow(i)}}_{nn}$.
\vskip 4 pt
{\bf Case 2:} $v=\widehat{d}$ where $d\in\{2,\ldots, \lambda_i-1\}$  (this case is possible if $\la_i\geq  3$). 
See Figure \ref{FIGb}. 
In this case, $\la'=\lambda^{\uparrow{(\lambda_i-1-d,d)}}$, so the number of such matchings $\de$ for fix $d$ is equal 
to $\tilde{b}^{\lambda^{\uparrow{(\lambda_i-1-d,d)}}}_{nn}$.
\vskip 4 pt
{\bf Case 3:}  $v$ belongs to $2\lambda_j$-cycle where $j\ne i$. See Figure \ref{FIGc}. 

Now $\la'=\la_{\downarrow{(\lambda_i, \lambda_j)}}$ and as $v$ runs the set of $2\lambda_j$ vertices then the number 
of such matchings $\de$ is equal to $2\lambda_j \tilde{b}^{\lambda_{\downarrow{(\lambda_i, \lambda_j)}}}_{nn}.$

Summing numbers of good matchings in all cases we obtain formula (\ref{comb2}). 

\vskip 5 pt
 The proof of formula (\ref{comb1}) is similar. But in view of biparticity of good matchings in this case we have two distinct:
\begin{itemize} 
\item the case 1 is not possible;

\item in the case 3 $v\in [\widehat{n}]$ so the factor before the sum $\sum_{j\ne i} c^{\la_{\downarrow{(\lambda_i, \lambda_j)}}}_{nn}$ is equal to $\la_j$ (not $2\la_j$),
\end{itemize}
  due to item (2) of Proposition \ref{combint}.
\end{proof}

\begin{rem} Of course, one can prove Theorem \ref{thm : comb} in terms of permutations without using graphs. But this way is much longer even in the simple case of coefficients $c^{\la}_{nn}$. Despite this, it is useful to clarify the reduction of $n+1$ to $n$ in terms of permutations. Exactly, let $\sigma=\tau\rho$ is a factorization of a permutation 
$\sigma\in S_{n+1}$ as the product of two $(n+1)$-cycles. Suppose that the cycle type of $\sigma$ is $\la$ and element $n+1$ belongs to the cycle of size $\la_i$. Denote by $\tau'$ and $\rho'$ the $n$-cycles obtained from $\sigma$ and $\tau$ by removing $n+1$. One can prove that the cycle type of $\sigma'\tau'\in S_n$ is $\la^{\uparrow(\la_i-1-d,d)}$ for some $d\in \{1,\ldots,\la_i-2\}$ or $\la_{\downarrow(\la_i,\la_j)}$ for some $j\ne i$.    
\end{rem}

\begin{exs} Show that $c_{55}^5= 2c^{(3,1)}_{44}+c_{44}^{(2,2)}$ in terms of permutations.

a) Show all ways to write a permutation $(123)(4) \in S_4$ as a product of two $4$-cycles and corresponding representations of a permutation $(12345)\in S_5$ as the product of two $5$-cycles: 
$$\begin{aligned} &(123)(4)=(1432)(1324) & \leftrightarrow & \quad (12345)=(15432)(13524), \\ & (123)(4)=(1324)(1342) & \leftrightarrow & \quad(12345)=(13254)(13542),
\\ & (123)(4)=(1342)(1432) & \leftrightarrow & \quad(12345)=(13542)(14352). \end{aligned}$$

b) Show all ways to write Permutation $(12)(34) \in S_4$ as the product of two $4$-cycles and the corresponding factorizations Permutation $(12345)\in S_5$ as a product of two $5$-cycles: 
$$\begin{aligned} & (12)(34)=(1324)(1324) & \leftrightarrow & \quad (12345)=(15324)(13254),\\ & (12)(34)=(1423)(1423) & \leftrightarrow & \quad(12345)=(14253)(14253).\end{aligned}$$
So $c_{55}^5=2\cdot 3+2=8$.
\end{exs}


\section{Proof of Theorem \ref{thm : rec}}
\label{sec : alg}
\subsection{Laplace Beltrami operator, generalized binomial coefficients and Pieri formula}
\label{sec : lbogbcpf}
In order to prove Theorem \ref{thm : rec} for general $\alpha$ we need to recall a few properties about Jack symmetric functions. \\
For indeterminate $x = (x_1,x_2,\ldots)$ define the {\bf Laplace Beltrami operator} by
\begin{equation}
\label{eq : D}
D(\al) = \frac{\al}{2}\sum_{i}x_i^2\frac{\partial^2}{\partial x_i^2}+\sum_{i}\sum_{j\neq i}\frac{x_ix_j}{x_i-x_j}\frac{\partial}{\partial x_i}
\end{equation}
We have the two following classical properties.
\begin{lem}
The Jack's symmetric functions are eigenfunctions of $D(\al)$:
\begin{equation}
\label{eq : eigen}
D(\al)J_\la^\al =\theta_{[1^{|\la|-2}2^1]}^\la(\al)J_\la^\al
\end{equation}
\end{lem}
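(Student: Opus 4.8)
The plan is to verify directly that $D(\al)$ preserves the triangular structure of the expansion of $J_\la^\al$ in the monomial basis, and then identify the eigenvalue by examining a single coefficient. First I would recall that $J_\la^\al$ is characterized by being self-orthogonal for $\langle\cdot,\cdot\rangle_\al$ together with the triangularity condition $J_\la^\al = \sum_{\mu \leq \la} c_{\la\mu}\, m_\mu$ (sum over partitions $\mu$ dominated by $\la$ in dominance order) with $c_{\la\la} = h_\la(\al)$. So it suffices to show that $D(\al)$ is a linear operator on $\Lambda_n$ (the degree-$n$ part) that is \emph{triangular} with respect to dominance order when written in the monomial basis, i.e.\ $D(\al) m_\mu = \sum_{\rho \leq \mu} d_{\mu\rho}\, m_\rho$, and that its diagonal entries $d_{\mu\mu}$ are \emph{distinct} for distinct $\mu$ of the same size. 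Granting this, an easy induction on dominance order shows that each eigenspace of $D(\al)$ on $\Lambda_n$ is one-dimensional and spanned by a function of the form $m_\la + (\text{lower terms})$; since $J_\la^\al$ has exactly this shape, it must be the corresponding eigenvector, with eigenvalue $d_{\la\la}$.

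The computational heart is therefore the action of $D(\al)$ on a monomial symmetric function. I would apply $D(\al)$ to $m_\mu$ term by term: the operator $\tfrac{\al}{2}\sum_i x_i^2 \partial_i^2$ acting on a monomial $x^a$ multiplies it by $\tfrac{\al}{2}\sum_i a_i(a_i-1)$, and the first-order part $\sum_{i\neq j}\tfrac{x_ix_j}{x_i-x_j}\partial_i$, after the standard symmetrization manoeuvre $\tfrac{x_ix_j}{x_i-x_j}\partial_i + \tfrac{x_ix_j}{x_j-x_i}\partial_j$ applied to a symmetric polynomial, produces terms $x^a$ where one coordinate has been raised and another lowered — which in terms of partitions replaces $\mu$ by something dominated by $\mu$ (or keeps $\mu$, contributing to the diagonal). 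Collecting the diagonal contribution gives $d_{\mu\mu} = \al\sum_i \binom{\mu_i}{2} - \sum_i (i-1)\mu_i = \sum_{s\in\mu}(\al a'(s) - l'(s))$ after reindexing, which by Equation~(\ref{eq : theta_21}) is exactly $\theta_{[1^{n-2}2^1]}^\mu(\al)$; and one checks these values are distinct across partitions of $n$ because, e.g., they strictly decrease along dominance order (the $\al$-coefficient $\sum\binom{\mu_i}{2}$ is strictly monotone under a dominance-order covering move).

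The main obstacle I anticipate is the bookkeeping in the first-order part: one must carefully argue that the symmetrized operator sends $m_\mu$ into a nonnegative combination of $m_\rho$ with $\rho \leq \mu$ in dominance order, i.e.\ that raising $\mu_j$ by $1$ and lowering $\mu_k$ by $1$ (for $j$ giving a larger part, $k$ a smaller one) cannot escape the dominance-order down-set of $\mu$ — together with the edge cases where the resulting composition is not a partition or has repeated parts, which affect only the combinatorial coefficients and not the triangularity. An alternative, if one prefers to avoid even this, is simply to cite that this is a classical fact (Sekiguchi--Debiard; see Macdonald, \emph{Symmetric Functions and Hall Polynomials}, VI.4, or Stanley \cite{S89}), since the paper treats it as a ``classical property.'' Either way, once triangularity and distinctness of diagonal entries are in hand, the identification $D(\al)J_\la^\al = \theta_{[1^{|\la|-2}2^1]}^\la(\al) J_\la^\al$ follows immediately.
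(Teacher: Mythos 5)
The paper itself offers no proof of this lemma; it is stated as a classical fact (it is the Sekiguchi--Debiard/Laplace--Beltrami eigenvalue property, cf.\ Macdonald VI.4 and Stanley \cite{S89}), so your fallback of simply citing the literature is exactly what the authors do. Your sketch of the standard proof gets the two computational ingredients right: $D(\al)$ is triangular on $m_\mu$ with respect to dominance order, and the diagonal entry is $\al\sum_i\binom{\mu_i}{2}-\sum_i(i-1)\mu_i=\sum_{s\in\mu}(\al a'(s)-l'(s))=\theta^{\mu}_{[1^{n-2}2^1]}(\al)$.

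However, the final identification step has a genuine gap. First, the claim that ``each eigenspace of $D(\al)$ on $\Lambda_n$ is one-dimensional'' is false: the diagonal entries need only be distinct along chains of the dominance order, and incomparable partitions can share an eigenvalue (e.g.\ for $n=6$, both $(4,1,1)$ and $(3,3)$ give $6\al-3$). That part is repairable, since triangularity plus distinctness along chains still yields, for each $\la$, a unique (up to scalar) eigenvector lying in $\mathrm{span}\{m_\mu:\mu\leq\la\}$ with nonzero $m_\la$-coefficient. The real problem is the conclusion ``since $J_\la^\al$ has exactly this shape, it must be the corresponding eigenvector'': having the shape $h_\la(\al)m_\la+(\text{lower terms})$ does not make a function an eigenvector --- almost no function of that shape is one. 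To close the argument you must invoke the \emph{orthogonality} half of the characterization of $J_\la^\al$, which you state but never use, and that in turn requires the one ingredient missing from your plan: $D(\al)$ is self-adjoint for $\langle\cdot\,,\cdot\rangle_\al$. With self-adjointness one argues (as Stanley does) that $D(\al)J_\la^\al=\sum_{\mu\leq\la}f_\mu J_\mu^\al$ by triangularity, and then $f_\mu\langle J_\mu^\al,J_\mu^\al\rangle_\al=\langle J_\la^\al,D(\al)J_\mu^\al\rangle_\al=0$ for $\mu<\la$, again by triangularity; only then does the eigenvalue read off the diagonal. Without self-adjointness (or some equivalent link between $D(\al)$ and the scalar product), triangularity alone cannot single out $J_\la^\al$.
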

\begin{lem}
\label{lem : dp} Operator $D(\al)$ can be expressed as:
\begin{equation*}
D(\al) = (\al-1)N + \al U + S,
\end{equation*}
where for $\la \vdash n$:
\begin{align}
\label{eq : n}N &=  \frac{1}{2}\sum_{i}i(i-1)p_i\frac{\partial}{\partial p_i},\\
\label{eq : defU} U &=\frac{1}{2}\sum_{i,j}ijp_{i+j}\frac{\partial}{\partial p_i}\frac{\partial}{\partial p_j},\\
 S & = \frac{1}{2}\sum_{i,j}(i+j)p_ip_j\frac{\partial}{\partial p_{i+j}},
\end{align}
\end{lem}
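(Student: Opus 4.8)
The final statement to prove is Lemma~\ref{lem : dp}: the decomposition $D(\al) = (\al-1)N + \al U + S$ of the Laplace Beltrami operator when expressed in the power sum basis, where $N$, $U$, $S$ are the operators given in \eqref{eq : n}, \eqref{eq : defU}, and the last display. The plan is to rewrite the two pieces of $D(\al)$ from the coordinate definition \eqref{eq : D} as operators acting on $\Lambda$ in terms of the variables $p_1, p_2, \ldots$, treating $\Lambda = \CC[p_1, p_2, \ldots]$ as a polynomial ring so that the $p_i$ are algebraically independent generators and multiplication/differentiation with respect to them is well defined. The key tool is the standard translation of the elementary operators ``multiply by $x_i$'' and ``$\partial/\partial x_i$'' (summed symmetrically over $i$) into operators on power sums via the chain rule $\partial/\partial x_i = \sum_k k x_i^{k-1}\,\partial/\partial p_k$, using $p_k = \sum_i x_i^k$.

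\textbf{Step 1: the second-order term.} First I would expand $\frac{\al}{2}\sum_i x_i^2 \partial^2/\partial x_i^2$ by substituting the chain-rule expression for $\partial/\partial x_i$ and collecting the result as a differential operator in the $p_k$. Applying $\partial/\partial x_i$ twice produces two kinds of contributions: a ``diagonal'' part where the second derivative hits the explicit power $x_i^{k-1}$, and a ``product'' part where it hits a second factor $\partial/\partial p_l$. After multiplying by $x_i^2$ and summing over $i$, the monomial sums $\sum_i x_i^{a}$ collapse to power sums $p_a$ (or to the factor $i(i-1)$-type coefficients when the exponent drops low enough). The expectation is that this term splits into a piece proportional to $\sum_i i(i-1) p_i \partial/\partial p_i$ and a piece proportional to $\sum_{i,j} ij\, p_{i+j}\, \partial/\partial p_i\, \partial/\partial p_j$; scaling by $\al/2$ yields exactly the $\al$-weighted parts of $N$ and $U$.

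\textbf{Step 2: the first-order (interaction) term.} Next I would handle $\sum_i \sum_{j\neq i} \frac{x_i x_j}{x_i - x_j}\,\partial/\partial x_i$. The standard device here is to symmetrize the summand over the pair $(i,j)$ and use the algebraic identity $\frac{x_i x_j}{x_i - x_j}\big(\partial/\partial x_i - \partial/\partial x_j\big)$ after combining the $i,j$ and $j,i$ terms; together with the expansion $\frac{x_i^{a+1} - x_j^{a+1}}{x_i - x_j} = \sum_{b} x_i^{b} x_j^{a-b}$ to clear the denominator, this turns the rational expression into a polynomial one whose symmetric sums again collapse to power sums. The anticipated outcome is a contribution that supplies the remaining $-1$ adjustment turning the $\al$-coefficient of $N$ into $(\al-1)$, plus the full operator $S = \frac{1}{2}\sum_{i,j}(i+j) p_i p_j\,\partial/\partial p_{i+j}$.

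\textbf{Step 3: assembly.} Finally I would collect the outputs of Steps~1 and~2 and verify that the coefficients combine to give precisely $(\al-1)N + \al U + S$. The main obstacle will be Step~2: the pole at $x_i = x_j$ must be removed cleanly before passing to power sums, and keeping track of the combinatorial bookkeeping (which exponents survive, and the exact integer coefficients from the $i(i-1)$ and $(i+j)$ factors) is where sign and index errors are most likely. A clean way to discharge this rigorously, which I would mention, is to note that both sides are well-defined operators of degree~$0$ on the graded ring $\Lambda$, so it suffices to check the identity on a spanning set; one may either verify it on the power sum generators $p_k$ and on products $p_k p_l$ directly, or invoke \eqref{eq : eigen} and compare the action of both sides on a Jack basis. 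This reduces the proof to a finite, low-degree verification rather than a full operator manipulation, and serves as an independent check on the constants obtained in Steps~1 and~2.
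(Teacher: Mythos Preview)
Your plan is correct and is exactly the standard derivation of this classical identity: convert the two pieces of \eqref{eq : D} to power-sum variables via the chain rule $\partial/\partial x_i=\sum_{k\geq 1}kx_i^{k-1}\partial/\partial p_k$, obtaining $\frac{\al}{2}\sum_i x_i^2\partial^2/\partial x_i^2=\al N+\al U$ and $\sum_{i\neq j}\frac{x_ix_j}{x_i-x_j}\partial/\partial x_i=S-N$ (the latter via the symmetrization and the telescoping of $(x_i^{k-1}-x_j^{k-1})/(x_i-x_j)$, followed by $\sum_{i\neq j}x_i^ax_j^b=p_ap_b-p_{a+b}$), and then add. Note, however, that the paper does not actually prove Lemma~\ref{lem : dp}: it is stated, together with \eqref{eq : eigen}, as one of ``two following classical properties'' and left without proof, so there is no argument in the paper to compare yours against.
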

Then, for two partitions, $\mu \subseteq \la$ the generalized binomial coefficients $\binom{\la}{\mu}$ are defined through the relation
\begin{equation*}
\frac{J_\la(1+x_1, 1+x_2,\ldots)}{J_\la(1,1,\ldots)} = \sum_{\mu \subseteq \la}\binom{\la}{\mu}\frac{J_\mu(x_1,x_2,\ldots)}{J_\mu(1,1,\ldots)}.
\end{equation*}
For $\rho \vdash n+1$ and integer $1\leq i \leq \ell(\rho)$ define the partition $\rho_{(i)}$ of $n$ (if it exists) obtained by replacing $\rho_i$ in $\rho$ by $\rho_i-1$ and keeping all the other parts as in $\rho$. Similarly for $\gamma \vdash n$ and integer $1\leq i \leq \ell(\gamma)+1$ we define the partition $\gamma^{(i)}$ of $n+1$ (if it exists) obtained by replacing $\gamma_i$ in $\gamma$ by $\gamma_i+1$ and keeping all the other parts as in $\gamma$. Define also the numbers $c_i(\gamma)$ as
\begin{equation}
\label{eq : cdef} c_i(\gamma) = \alpha\binom{\gamma^{(i)}}{\gamma}\frac{j_\gamma(\al)}{j_{\gamma^{(i)}}(\al)}.
\end{equation}
As a result of this definition, one has:
\begin{equation}
\label{eq : c}
c_i(\rho_{(i)}) = \alpha\binom{\rho}{\rho_{(i)}}\frac{j_{\rho_{(i)}}(\al)}{j_\rho(\al)}
\end{equation}
In \cite{ML89} Lassalle show that the Pieri formula for Jack symmetric functions is equal to
\begin{equation*}
p_1J_\gamma^\al = \sum_{i=1}^{\ell(\gamma)+1}c_i(\gamma)J_{\gamma^{(i)}}.
\end{equation*}
Additionally Lassalle showed \cite{ML89} that
\begin{equation}
\label{eq : p1p}p_1^\perp J_\rho^\al = \al\frac{\partial}{\partial p_1}J_\rho^\al = \al \sum_{i=1}^{\ell(\rho)}\binom{\rho}{\rho_{(i)}}J_{\rho_{(i)}}.
\end{equation}
Following \cite{ML08}, we introduce the two conjugate operators $E_2$ and  $E_2^{\perp}$
\begin{align}
\label{eq : e} &E_2 = [D(\al),p_1/\al] = \sum_{k\geq 1}kp_{k+1}\frac{\partial}{\partial p_k},\\
&E_2^{\perp} = [p_1^\perp/\al,D(\al)] = \sum_{k\geq 1}(k+1)p_k\frac{\partial}{\partial p_{k+1}}.
\end{align}
Using indeterminate $x = (x_1,x_2,\ldots)$, on can express $E_2$ (see \cite{ML08}) as:
\begin{equation}
\label{eq : ealt} E_2 = \sum_{i}x_i^2\frac{\partial}{\partial x_i}.
\end{equation}


\subsection{Relating two series for Jack symmetric functions}
\label{sec : rtsfjsf}
We're now ready to show our main formula. As a first step this section is devoted to the proof of the following theorem.
\begin{thm}\label{thm : JJJ} Let $x$ and $y$ be two indeterminate. The following relation holds:
\begin{equation}
\sum_{\rho \vdash n+1}\frac{\theta^\rho_{n+1}J_\rho^\al(x)E_2^\perp J_\rho^\al(y)}{j_\rho(\al)} = \sum_{\gamma \vdash n}\frac{\theta^\gamma_{n}J_\gamma^\al(y)[D(\al),E_2]J_\gamma^\al(x)}{j_\gamma(\al)}.
\end{equation}
\end{thm}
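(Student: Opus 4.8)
The strategy is to expand both sides in the power-sum basis and compare coefficients, or more efficiently, to manipulate the two generating series using the operator identities from Section~\ref{sec : lbogbcpf}. I would start from the right-hand side. Recall that $E_2 = [D(\al),p_1/\al]$ by Equation~(\ref{eq : e}), so $[D(\al),E_2] = [D(\al),[D(\al),p_1/\al]]$, and similarly $E_2^\perp = [p_1^\perp/\al,D(\al)]$. The natural object to introduce is the ``diagonal'' Cauchy-type kernel $\sum_{\gamma \vdash n} \theta^\gamma_n J_\gamma^\al(x) J_\gamma^\al(y)/j_\gamma(\al)$ and its analogue at level $n+1$; both sides of the claimed identity are obtained from such kernels by applying $E_2$ (or $E_2^\perp$) in one variable, using that $J_\gamma^\al$ is an eigenfunction of $D(\al)$ (Equation~(\ref{eq : eigen})) so that $D(\al)$ acting in either variable can be traded for the scalar $\theta^\gamma_{[1^{|\gamma|-2}2]}(\al)$.

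The key computational steps I would carry out, in order: first, establish how $\theta^\rho_{n+1}$ at level $n+1$ relates to $\theta^\gamma_n$ at level $n$ when $\rho = \gamma^{(i)}$ — this is exactly the content of Equation~(\ref{eq : theta_n}), which gives $\theta^{(n)}_\la$ as a product over boxes $s \in \la \setminus \{(1,1)\}$ of $(\al a'(s) - l'(s))$; adding a box to $\gamma$ to form $\gamma^{(i)}$ multiplies by one new factor. Second, use the Pieri rule $p_1 J_\gamma^\al = \sum_i c_i(\gamma) J_{\gamma^{(i)}}^\al$ and its adjoint $p_1^\perp J_\rho^\al = \al \sum_i \binom{\rho}{\rho_{(i)}} J_{\rho_{(i)}}^\al$ from Equation~(\ref{eq : p1p}) to move between the two levels; the definition~(\ref{eq : cdef}) of $c_i(\gamma)$ is engineered precisely so that the weights $c_i(\gamma)/j_{\gamma^{(i)}}(\al)$ and $\binom{\rho}{\rho_{(i)}}/j_\rho(\al)$ match up after summation. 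Third, combine these: apply $p_1/\al$ in the $x$-variable to the level-$n$ kernel, then apply $D(\al)$ in the $x$-variable and use~(\ref{eq : eigen}); the commutator structure of $[D(\al),E_2]$ on the right and $E_2^\perp$ on the left should both reduce to the same double sum over pairs $\gamma \subset \rho$ with $|\rho| = |\gamma| + 1$, weighted by Pieri coefficients and character ratios.

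Concretely, I expect the cleanest route is: show $\sum_{\rho} \theta^\rho_{n+1} J_\rho^\al(x) E_2^\perp J_\rho^\al(y)/j_\rho(\al)$ equals $\sum_\rho \theta^\rho_{n+1} J_\rho^\al(x) \, [p_1^\perp/\al, D(\al)] J_\rho^\al(y)/j_\rho(\al)$, expand the commutator using~(\ref{eq : eigen}) so only the ``off-diagonal'' part of $p_1^\perp$ survives with a difference of eigenvalues $\theta^\rho_{[1^{n-1}2]} - \theta^{\rho_{(i)}}_{[1^{n-2}2]}$ as coefficient, rewrite via~(\ref{eq : p1p}), and then recognize the result — after using the relation between $\theta^\rho_{n+1}$ and $\theta^{\rho_{(i)}}_n$ and the binomial/character identity~(\ref{eq : c}) — as the symmetric expression one gets by the mirror-image computation of $[D(\al),E_2]$ applied in $x$ to the level-$n$ kernel.

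**Main obstacle.** The hard part will be the bookkeeping that makes the two sides literally coincide: matching the Pieri coefficients $c_i(\gamma)$ against the derivative coefficients $\binom{\rho}{\rho_{(i)}}$, tracking the $j_\gamma(\al)$ versus $j_\rho(\al)$ normalizations, and controlling the eigenvalue differences $\theta^\rho_{[1^{n-1}2]}(\al) - \theta^\gamma_{[1^{n-2}2]}(\al)$ that appear when expanding the commutators via~(\ref{eq : eigen}). One must verify that the ``multiplication by a new box'' factor relating $\theta^\rho_{n+1}$ to $\theta^\gamma_n$ is exactly compensated by these eigenvalue differences and the Pieri weights, so that the extra scalar factors telescope into the desired form. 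This is the delicate identity-chasing step where Lassalle's framework (the precise shape of~(\ref{eq : cdef}) and~(\ref{eq : c})) does the real work; the rest is formal commutator algebra.
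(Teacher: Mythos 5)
Your plan follows the paper's own proof essentially step for step: expand $E_2^\perp=[p_1^\perp/\al,D(\al)]$ on $J_\rho^\al(y)$ via the eigenfunction equation and the adjoint Pieri rule, use the normalization identity~(\ref{eq : c}) to trade $\binom{\rho}{\rho_{(i)}}/j_\rho(\al)$ for $c_i(\rho_{(i)})/j_{\rho_{(i)}}(\al)$, reindex over $\gamma=\rho_{(i)}$, and invoke the new-box relation $\theta_{n+1}^{\gamma^{(i)}}=\theta_n^\gamma\bigl(\theta^{\gamma^{(i)}}_{[1^{n-1}2]}-\theta^{\gamma}_{[1^{n-2}2]}\bigr)$ (the paper's Lemma~\ref{lem : theta}), so that the squared eigenvalue difference reassembles into $[D(\al),[D(\al),p_1/\al]]=[D(\al),E_2]$ via the Pieri rule. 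The approach and all key ingredients coincide with the paper's argument.
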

\noindent From Equation (\ref{eq : p1p}), (\ref{eq : c}) and (\ref{eq : eigen}), one gets for any integer partition $\rho \vdash n+1$
\begin{align}
\frac{E_2^\perp J_\rho^\al(y)}{j_\rho(\al)} &=\frac{1}{j_\rho(\al)} [p_1^\perp/\al,D(\al)]J_\rho^\al(y) \\
&= \frac{1}{\al j_\rho(\al)}p_1^\perp D(\al)J_\rho^\al(y) - \frac{1}{\al}D(\al)\left(\sum_{i=1}^{\ell(\rho)}c_i(\rho_{(i)})\frac{J_{\rho_{(i)}}^\al(y)}{j_{\rho_{(i)}}(\al)}\right) \\
&=\frac{1}{\al}\sum_{i=1}^{\ell(\rho)}c_i(\rho_{(i)})\left(\theta^\rho_{[1^{n-1}2]}(\al)-\theta^{\rho_{(i)}}_{[1^{n-2}2]}(\al)\right)\frac{J_{\rho_{(i)}}^\al(y)}{j_{\rho_{(i)}}(\al)}.
\end{align}
Multiplying both sides by $\theta_{n+1}^\rho(\al) J_\rho^\al(x)$ and summing over all partitions $\rho$ of $n+1$ yields
\begin{align}
\nonumber &\sum_{\rho \vdash n+1}\frac{\theta^\rho_{n+1}(\al)J_\rho^\al(x) E_2^\perp J_\rho^\al(y)}{j_\rho(\al)} =\\
\label{eq : 40}&\frac{1}{\al}\sum_{\rho \vdash n+1} \sum_{i=1}^{\ell(\rho)}c_i(\rho_{(i)})\left(\theta^\rho_{[1^{n-1}2]}(\al)-\theta^{\rho_{(i)}}_{[1^{n-2}2]}(\al)\right)\frac{\theta^\rho_{n+1}(\al)J_\rho^\al(x)J_{\rho_{(i)}}^\al(y)}{j_{\rho{(i)}}(\al)}.
\end{align}
Reorganizing the summation indices on the RHS gives
\begin{align}
\nonumber &\sum_{\rho \vdash n+1}\frac{\theta^\rho_{n+1}(\al)J_\rho^\al(x) E_2^\perp J_\rho^\al(y)}{j_\rho(\al)} =\\
&\frac{1}{\al}\sum_{\gamma \vdash n} \sum_{i=1}^{\ell(\gamma)+1}c_i(\gamma)\left(\theta^{\gamma^{(i)}}_{[1^{n-1}2]}(\al)-\theta^{\gamma}_{[1^{n-2}2]}(\al)\right)\frac{\theta^{\gamma^{(i)}}_{n+1}(\al)J_{\gamma^{(i)}}^\al(x)J_{\gamma}^\al(y)}{j_{\gamma}(\al)}.
\end{align}
We prove the following lemma
\begin{lem}
\label{lem : theta}
For $\gamma \vdash n$ and integer $i$ the following relation is fulfilled:
\begin{equation}
\theta_{n+1}^{\gamma^{(i)}}(\al) = \theta_{n}^\gamma(\al)\left(\theta^{\gamma^{(i)}}_{[1^{n-1}2]}(\al)-\theta^{\gamma}_{[1^{n-2}2]}(\al)\right).
\end{equation}
\end{lem}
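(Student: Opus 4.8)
\textbf{Proof proposal for Lemma \ref{lem : theta}.}

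The plan is to express both sides of the claimed identity in terms of the known explicit formulas \eqref{eq : theta_21} and \eqref{eq : theta_n} for $\theta^\la_{(n)}(\al)$ and $\theta^\la_{[1^{n-2}2]}(\al)$, and then verify the identity by a direct comparison of products over the cells of the relevant Young diagrams. Write $\la = \gamma^{(i)}$; this diagram is obtained from $\gamma$ by adding a single box, say $s_0$, at the end of row $i$. The factor $\al a'(s)-l'(s)$ attached to a box $s$ depends only on its co-armlength $a'(s)$ (the column index minus one) and its co-leglength $l'(s)$ (the row index minus one), neither of which changes when a box is added to a strictly lower row or a strictly further column; so almost all the boxes common to $\gamma$ and $\la$ contribute the same factor on both sides.

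First I would treat the case $|\gamma|\geq 2$, so that $\gamma$, $\la$ both contain the corner box $(1,1)$. By \eqref{eq : theta_n},
\begin{equation*}
\frac{\theta^{\la}_{(n+1)}(\al)}{\theta^{\gamma}_{(n)}(\al)} = \frac{\prod_{s\in\la\setminus\{(1,1)\}}(\al a'(s)-l'(s))}{\prod_{s\in\gamma\setminus\{(1,1)\}}(\al a'(s)-l'(s))}.
\end{equation*}
Assuming the new box $s_0=\gamma^{(i)}_i$-th box of row $i$ is not the corner (I handle $\gamma=(0)$, i.e. $n=0$, separately as a degenerate base case, and $n=1$ by inspection), the only boxes whose $(a',l')$ data differ between $\gamma$ and $\la$ are $s_0$ itself, which is present in $\la$ but not $\gamma$. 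Hence the ratio telescopes to the single factor $\al a'(s_0)-l'(s_0)$, where $a'(s_0)=\gamma_i$ and $l'(s_0)=i-1$. So the left side equals $\al\gamma_i-(i-1)$. For the right side, \eqref{eq : theta_21} gives
\begin{equation*}
\theta^{\la}_{[1^{n-1}2]}(\al)-\theta^{\gamma}_{[1^{n-2}2]}(\al) = \sum_{s\in\la}(\al a'(s)-l'(s)) - \sum_{s\in\gamma}(\al a'(s)-l'(s)),
\end{equation*}
and again every box common to the two diagrams cancels, leaving exactly the contribution $\al a'(s_0)-l'(s_0) = \al\gamma_i-(i-1)$ of the new box. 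The two sides therefore agree. The small cases $n=0$ and $n=1$: for $n=1$, $\gamma=(1)$, $\la=(2)$ or $(1,1)$; one checks $\theta^{(2)}_{(2)}=\al$, $\theta^{(1,1)}_{(2)}=-1$, $\theta^{(1)}_{(1)}=1$, $\theta^{(2)}_{[1^0 2]}=\al$, $\theta^{(1,1)}_{[1^0 2]}=-1$, $\theta^{(1)}_{[1^{-1}2]}$ being the empty product $=0$ (or, more carefully, interpreting $\theta^{\gamma}_{[1^{n-2}2]}$ for $n=1$ as $0$ by convention), so both sides match; for $n=0$ the statement is vacuous or trivial depending on convention.

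The one genuine subtlety — and the step I expect to require the most care — is the role of the corner box $(1,1)$, which is excluded from the product in \eqref{eq : theta_n} for $\theta_{(n)}$ but included in the sum \eqref{eq : theta_21} for $\theta_{[1^{n-2}2]}$. When the box added to $\gamma$ to form $\la$ is the corner box itself (which happens precisely when $\gamma$ is empty, $n=0$), the telescoping argument for the $\theta_{(n)}$-ratio must be handled by hand since the excluded box is exactly the one being added; and one must check that the conventions for $\theta^{\gamma}_{[1^{n-2}2]}$ when $n\leq 1$ are chosen consistently (the natural reading being that this symbol is $0$ when $n=1$ and the identity is trivial when $n=0$). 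Once these boundary conventions are pinned down, the bulk of the lemma is the box-cancellation argument above, which is routine. I would close by remarking that this lemma, substituted into the displayed reorganized identity preceding it, collapses the factor $\bigl(\theta^{\gamma^{(i)}}_{[1^{n-1}2]}-\theta^{\gamma}_{[1^{n-2}2]}\bigr)\theta^{\gamma^{(i)}}_{n+1}$ into $\theta^{\gamma^{(i)}}_{n+1}\cdot\theta^{\gamma^{(i)}}_{n+1}/\theta^{\gamma}_n$, wait — more precisely it rewrites that product as $\theta^{\gamma}_n \bigl(\theta^{\gamma^{(i)}}_{[1^{n-1}2]}-\theta^{\gamma}_{[1^{n-2}2]}\bigr)^2$... in any case it produces the symmetric expression needed to recognize the right-hand side of Theorem \ref{thm : JJJ} via \eqref{eq : eigen} and the Pieri rule, which is the purpose for which the lemma is invoked.
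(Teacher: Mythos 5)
Your proposal is correct and follows essentially the same route as the paper: both express $\theta^{\gamma^{(i)}}_{n+1}$ and $\theta^{\gamma^{(i)}}_{[1^{n-1}2]}-\theta^{\gamma}_{[1^{n-2}2]}$ via the explicit formulas (\ref{eq : theta_n}) and (\ref{eq : theta_21}), observe that $a'(s)$ and $l'(s)$ are unchanged for every box common to $\gamma$ and $\gamma^{(i)}$, and reduce both sides to the single factor $\al\gamma_i-(i-1)$ contributed by the added box $(i,\gamma_i+1)$. The only difference is your explicit treatment of the boundary cases $n\leq 1$ and of the excluded corner box, which the paper passes over silently.
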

\begin{proof}
According to Equations (\ref{eq : theta_n}) and (\ref{eq : theta_21}) 
\begin{align}
&\theta_{n+1}^{\gamma^{(i)}}(\al) = \prod_{s \in \gamma^{(i)}\setminus (1,1)}(\al a'(s) - l'(s)),\\
&\theta_{[1^{n-1}2]}^{\gamma^{(i)}}(\al) = \sum_{s \in \gamma^{(i)}}(\al a'(s) - l'(s)).
\end{align}
As per its definition, the Young diagram of $\gamma^{(i)}$ is obtained from the one of $\gamma$ by adding a box in the position $(i,\gamma_i+1)$. But $a'(s)$ and $l'(s)$ are only functions of the number of boxes to the west and the north of the considered square $s$. As a result the value of $(\al a'(s) - l'(s))$ for a given $s$ in $\gamma^{(i)}$ is the same as for the corresponding box in $\gamma$ except for the special case $s = (i,\gamma_i+1)$. Therefore
\begin{align*}
\nonumber \theta_{n+1}^{\gamma^{(i)}}(\al) &= \left(\prod_{s \in \gamma\setminus (1,1)}(\al a'(s) - l'(s))\right)\bigl(\al a'(i,\gamma_i+1) - l'(i,\gamma_i+1)\bigr)\\
&=\theta_{n}^\gamma(\al)\left(\theta^{\gamma^{(i)}}_{[1^{n-1}2]}(\al)-\theta^{\gamma}_{[1^{n-2}2]}(\al)\right).
\end{align*}
This is the desired formula.
\end{proof}
\noindent Thanks to Lemma \ref{lem : theta} we get
\begin{align}
\nonumber &\sum_{\rho \vdash n+1}\frac{\theta^\rho_{n+1}(\al)J_\rho^\al(x) E_2^\perp J_\rho^\al(y)}{j_\rho(\al)} =\\
\label{eq : Ti-T}&\frac{1}{\al}\sum_{\gamma \vdash n}\frac{\theta_{n}^\gamma(\al)J_{\gamma}^\al(y)}{j_{\gamma}(\al)} \sum_{i=1}^{\ell(\gamma)+1}c_i(\gamma)\left(\theta^{\gamma^{(i)}}_{[1^{n-1}2]}(\al)-\theta^{\gamma}_{[1^{n-2}2]}(\al)\right)^2J_{\gamma^{(i)}}^\al(x).
\end{align}
Finally noticing the relation for integers $a,b$ 
\begin{align}
\nonumber\sum_{i=1}^{\ell(\gamma)+1}c_i(\gamma)\left(\theta^{\gamma^{(i)}}_{[1^{n-1}2]}(\al)\right)^a&\left(\theta^{\gamma}_{[1^{n-2}2]}(\al)\right)^bJ_{\gamma^{(i)}}^\al(x)\\
\nonumber&=D(\al)^a\left(\theta^{\gamma}_{[1^{n-2}2]}(\al)\right)^b\sum_{i=1}^{\ell(\gamma)+1}c_i(\gamma)J_{\gamma^{(i)}}^\al(x)\\
\nonumber&=D(\al)^a\left(\theta^{\gamma}_{[1^{n-2}2]}(\al)\right)^bp_1J_\gamma^\al(x)\\
\label{eq : DapDb}& =D(\al)^ap_1D(\al)^bJ_\gamma^\al(x)
\end{align}
gives the desired result. Indeed
\begin{align*}
\nonumber \sum_{\rho \vdash n+1}&\frac{\theta^\rho_{n+1}(\al)J_\rho^\al(x) E_2^\perp J_\rho^\al(y)}{j_\rho(\al)} \\
\nonumber &=\frac{1}{\al}\sum_{\gamma \vdash n}\frac{\theta_{n}^\gamma(\al)J_{\gamma}^\al(y)}{j_{\gamma}(\al)}\left (D(\al)^2p_1 -2D(\al)p_1D(\al)+p_1D(\al)^2\right)J_{\gamma}^\al(x)\\
\nonumber &=\sum_{\gamma \vdash n}\frac{\theta_{n}^\gamma(\al)J_{\gamma}^\al(y)}{j_{\gamma}(\al)}[D(\al),[D(\al),p_1/\al]]J_{\gamma}^\al(x)\\
&=\sum_{\gamma \vdash n}\frac{\theta_{n}^\gamma(\al)J_{\gamma}^\al(y)}{j_{\gamma}(\al)}[D(\al),E_2]J_{\gamma}^\al(x).
\end{align*}


\subsection{Recurrence relation between connection coefficients}
For $\la \vdash n+1$ and $\nu \vdash n$ extracting the coefficient in $p_\la(x)p_{\nu}(y)$ in Theorem \ref{thm : JJJ} yields
\begin{align*}
\nonumber \sum_{i:\,m_{i-1}(\nu) \neq 0}i(m_i(\nu)+1)\sum_{\rho \vdash n+1}&\frac{\theta^\rho_{n+1}(\al)\theta^\rho_{\nu^{\uparrow(i-1)}}(\al)\theta^\rho_{\la}(\al)}{j_\rho(\al)} =\\
 &\sum_{\gamma \vdash n}\frac{{\theta^\gamma_{n}(\al)}{\theta^\gamma_{\nu}(\al)}}{j_\gamma(\al)}[p_\la]\left([D(\al),E_2]J_\gamma^\al\right).
\end{align*}
We get the Jack connection coefficients $a_{n+1,\nu^{\uparrow(i-1)}}^\la(\al)$ multiplying the LHS by $\al^{\ell(\la)}z_\la$.
\begin{align}
\nonumber \sum_{i:\,m_{i-1}(\nu) \neq 0}i(m_i(\nu)+1)&a_{n+1,\nu^{\uparrow(i-1)}}^\la(\al)=\\
\label{eq : lala}& \al^{\ell(\la)}z_\la\sum_{\gamma \vdash n}\frac{{\theta^\gamma_{n}(\al)}{\theta^\gamma_{\nu}(\al)}}{j_\gamma(\al)}[p_\la]\left([D(\al),E_2]J_\gamma^\al\right).
\end{align}
The final step is to compute the remaining non explicit part $[p_\la]\left([D(\al),E_2]J_\gamma^\al\right)$. We show the following lemma:
\begin{lem}
The Lie hook of operators $D(\al)$ and $E_2$ is given by
\begin{align*}
\nonumber [D(\al),E_2] =(\al-1)\sum_{i\geq1}(i-1)^2p_i\frac{\partial}{\partial p_{i-1}}+ &\sum_{i,j\geq1}(i+j-1)p_ip_j\frac{\partial}{\partial p_{i+j-1}}\\
&+\al\sum_{i,j\geq1}ijp_{i+j+1}\frac{\partial}{\partial p_{i}}\frac{\partial}{\partial p_{j}}.
\end{align*}
\end{lem}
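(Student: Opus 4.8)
The plan is to compute the commutator $[D(\al),E_2]$ directly from the decomposition $D(\al) = (\al-1)N + \al U + S$ given in Lemma~\ref{lem : dp}, using the known formula $E_2 = \sum_{k\geq 1}kp_{k+1}\frac{\partial}{\partial p_k}$ from Equation~(\ref{eq : e}). By bilinearity of the Lie bracket,
\begin{equation*}
[D(\al),E_2] = (\al-1)[N,E_2] + \al[U,E_2] + [S,E_2],
\end{equation*}
so it suffices to evaluate the three brackets $[N,E_2]$, $[U,E_2]$ and $[S,E_2]$ separately and match them term-by-term with the three summands in the claimed expression: $(\al-1)[N,E_2]$ should give $(\al-1)\sum_{i\geq1}(i-1)^2p_i\frac{\partial}{\partial p_{i-1}}$, the bracket $[S,E_2]$ should produce $\sum_{i,j\geq1}(i+j-1)p_ip_j\frac{\partial}{\partial p_{i+j-1}}$, and $\al[U,E_2]$ should produce $\al\sum_{i,j\geq1}ijp_{i+j+1}\frac{\partial}{\partial p_{i}}\frac{\partial}{\partial p_{j}}$. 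In particular I expect $[N,E_2]$ to be (up to the $(\al-1)$ factor) a first-order operator of the same shape as $E_2$ but shifting down instead of up, $[U,E_2]$ to remain second-order, and $[S,E_2]$ to be the ``transpose'' phenomenon.

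The computations are the standard ones for differential operators in the $p_i$'s: treat the $p_i$ as independent variables and use the Leibniz-type rule $\frac{\partial}{\partial p_k}(p_m \cdot) = \delta_{km} + p_m\frac{\partial}{\partial p_k}$, equivalently the commutator $[\frac{\partial}{\partial p_k},p_m] = \delta_{km}$. First I would compute $[N,E_2]$: writing $N = \frac12\sum_a a(a-1)p_a\frac{\partial}{\partial p_a}$ and $E_2 = \sum_k k p_{k+1}\frac{\partial}{\partial p_k}$, the bracket only picks up contributions where the derivative of one operator hits the $p$ of the other, giving terms proportional to $p_{k+1}\frac{\partial}{\partial p_k}$ with a coefficient that works out to $k\cdot\bigl(\tfrac12(k+1)k - \tfrac12 k(k-1)\bigr) = k^2$; after reindexing $i = k+1$ this is $\sum_{i\geq 2}(i-1)^2 p_i\frac{\partial}{\partial p_{i-1}}$, i.e.\ $\sum_{i\geq1}(i-1)^2p_i\frac{\partial}{\partial p_{i-1}}$ since the $i=1$ term vanishes. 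Next I would compute $[S,E_2]$ with $S = \frac12\sum_{a,b}(a+b)p_ap_b\frac{\partial}{\partial p_{a+b}}$; here $E_2$'s derivative $\frac{\partial}{\partial p_k}$ can hit either $p_a$ or $p_b$ in $S$ (giving a factor of $2$ that cancels the $\tfrac12$), and $S$'s derivative $\frac{\partial}{\partial p_{a+b}}$ can hit the $p_{k+1}$ in $E_2$; careful bookkeeping of these two contributions and reindexing yields $\sum_{i,j\geq1}(i+j-1)p_ip_j\frac{\partial}{\partial p_{i+j-1}}$. Finally $[U,E_2]$ with $U = \frac12\sum_{a,b}ab\,p_{a+b}\frac{\partial}{\partial p_a}\frac{\partial}{\partial p_b}$ is handled the same way, the two second-order derivatives each able to act on $p_{k+1}$ and the single $p_{a+b}$ able to be acted on by $\frac{\partial}{\partial p_k}$; this gives $\sum_{i,j\geq1}ijp_{i+j+1}\frac{\partial}{\partial p_{i}}\frac{\partial}{\partial p_{j}}$.

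The main obstacle is purely bookkeeping: keeping track of which $\frac{\partial}{\partial p_k}$ contracts against which $p_m$, the symmetry factors of $\tfrac12$ versus the doubling from interchangeable indices, and the reindexing shifts (the $+1$ and $-1$ in the subscripts). There is no conceptual difficulty, but one must be disciplined about normal-ordering (moving all derivatives to the right) and about which cross-terms survive — in a commutator $[A,B]$ of such operators, the ``diagonal'' terms where derivatives of $A$ ignore $B$ and vice versa cancel, so only the contraction terms remain, and among those one must distinguish single contractions (yielding lower-order operators) from the possibility of double contractions (which for these particular operators do not arise because $E_2$ is first-order). A secondary subtlety is confirming that boundary terms at $i=1$ or $k=0$ genuinely vanish, which they do because of the explicit factors $(i-1)$, $k$, etc., so extending the sums to start at $1$ is harmless. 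Alternatively, as a cross-check, one could verify the first-order parts using the indeterminate-$x$ expression $E_2 = \sum_i x_i^2\frac{\partial}{\partial x_i}$ from Equation~(\ref{eq : ealt}) together with Equation~(\ref{eq : D}), but the $p$-variable computation is the cleanest route to the stated closed form.
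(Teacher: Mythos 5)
Your proposal is correct, but it takes a genuinely different route from the paper. The paper works entirely in the indeterminates $x_i$: it computes $[D(\al),E_2]$ from Equations (\ref{eq : D}) and (\ref{eq : ealt}) as an operator in the $x_i$, obtaining $\frac{\al}{2}\sum_i(2x_i^3\partial_{x_i}^2+2x_i^2\partial_{x_i})+\sum_{i\neq j}\frac{2x_i^2x_j}{x_i-x_j}\partial_{x_i}$, and then reads off the $p$-operator form by evaluating each piece on $p_\la$ — the nontrivial step being the telescoping identity $\sum_{i\neq j}\frac{2x_i^2x_j}{x_i-x_j}\partial_{x_i}p_a=a\sum_{k=1}^{a}p_{a+1-k}p_k-a^2p_{a+1}$. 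You instead stay in the $p$-variables throughout, using the decomposition $D(\al)=(\al-1)N+\al U+S$ of Lemma \ref{lem : dp} and computing $[N,E_2]$, $[S,E_2]$, $[U,E_2]$ by contraction bookkeeping; I checked that each bracket lands exactly on the corresponding summand of the claimed formula ($[N,E_2]=\sum_k k^2p_{k+1}\partial_{p_k}$, $[S,E_2]=\sum_{i,j}(i+j-1)p_ip_j\partial_{p_{i+j-1}}$ after symmetrizing the two contributions, and $[U,E_2]=\sum_{i,j}ijp_{i+j+1}\partial_{p_i}\partial_{p_j}$ since the third-order terms cancel and no double contraction can occur), so the term-by-term matching you anticipate does hold. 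Your approach buys self-containedness — it uses only data already stated in Section \ref{sec : lbogbcpf} and avoids the partial-fraction manipulation — at the cost of somewhat heavier normal-ordering bookkeeping for the second-order piece; the paper's approach trades that for one clean rational-function identity. Either proof is acceptable; the cross-check you mention at the end is in fact the paper's primary method.
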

\begin{proof}
Recall Equations (\ref{eq : D}) and (\ref{eq : ealt}). We have
\begin{align*}
[D(\al),E_2] &= \frac{\al}{2}\sum_{i}\left[x_i^2\frac{\partial^2}{\partial x_i^2},x_i^2\frac{\partial}{\partial x_i}\right]+\sum_{i\neq j}\left[\frac{x_ix_j}{x_i-x_j}\frac{\partial}{\partial x_i},x_i^2\frac{\partial}{\partial x_i}+x_j^2\frac{\partial}{\partial x_j}\right]\\
&= \frac{\al}{2}\sum_{i}\left(2x_i^3\frac{\partial^2}{\partial x_i^2}+2x_i^2\frac{\partial}{\partial x_i}\right) +\sum_{i\neq j}\frac{2x_i^2x_j}{x_i-x_j}\frac{\partial}{\partial x_i}.
\end{align*}
Then, for any integer partition $\la$, one can easily check:
\begin{align}
\label{eq : lolo}&\sum_i x_i^3\frac{\partial^2}{\partial x_i^2}p_\la = \sum_{i=1}^{\ell(\la)}\la_i(\la_i-1)\frac{p_{\la_i+1}}{p_{\la_i}}p_\la+\sum_{i\neq j}\la_i\la_j\frac{p_{\la_{i}+\la_{j}+1}}{p_{\la_i}p_{\la_j}}p_\la,\\
\label{eq : lili}&\sum_i x_i^2\frac{\partial}{\partial x_i}p_\la = \sum_{i=1}^{\ell(\la)}\la_i\frac{p_{\la_i+1}}{p_{\la_i}}p_\la.
\end{align}
Assume $\la$ is the single part partition $(a)$ for some integer $a$. One has
\begin{align*}
\sum_{i\neq j}\frac{2x_i^2x_j}{x_i-x_j}\frac{\partial}{\partial x_i}p_a &= 2a\sum_{i\neq j}\frac{x_i^{a+1}x_j}{x_i-x_j}\\
&=a\sum_{i\neq j}\frac{x_i^{a+1}x_j}{x_i-x_j}+\frac{x_j^{a+1}x_i}{x_j-x_i}\\
&=a\sum_{i\neq j}x_ix_j\frac{x_i^{a}-x_j^a}{x_i-x_j}\\
&=a\sum_{i\neq j}\sum_{k=1}^{a}x_i^{a+1-k}x_j^{k}\\
&=a\left(\sum_{k=1}^{a}\left(\sum_{i}x_i^{a+1-k}\right)\left(\sum_{j}x_j^{k}\right)-\sum_{k=1}^{a}x_i^{a+1-k}x_i^k\right)\\
&=a\sum_{k=1}^{a}p_{a+1-k}p_k-a^2p_{a+1}.
\end{align*}
As a result, for any partition $\la$
\begin{align}
\label{eq : lele}\sum_{i\neq j}\frac{2x_i^2x_j}{x_i-x_j}\frac{\partial}{\partial x_i}p_\la = \sum_{i=1}^{\ell(\la)}\la_i\sum_{k=1}^{\la_i}\frac{p_{\la_i+1-k}p_k}{p_{\la_i}}p_\la-\sum_{i=1}^{\ell(\la)}\la_i^2\frac{p_{\la_i+1}}{p_{\la_i}}p_\la.
\end{align}
Combining Equations (\ref{eq : lolo}), (\ref{eq : lili}) and (\ref{eq : lele}) gives the desired result.
\end{proof}

\noindent Using this lemma we can derive
\begin{align*}
\nonumber [p_\la]&\left([D(\al),E_2]J_\gamma^\al\right) = (\al-1)\sum_{i}(i-1)^2(m_{i-1}(\la)+1)\theta^{\gamma}_{\la_{\downarrow(i)}},\\
\nonumber &+\al\sum_{i,d}(i-1-d)j(m_{i-1-d}(\la)+1)(m_{d}(\la)+1-\delta_{i-d-1,j})\theta^{\gamma}_{\la^{\uparrow(i-1-d,d)}},\\
&+\sum_{i,j}(i+j-1)(m_{i-1+j}(\la)+1)\theta^{\gamma}_{\la_{\downarrow(i,j)}}.
\end{align*}
But
\begin{align}
\nonumber &\al^{\ell(\la)}z_\la(i-1)^2(m_{i-1}(\la)+1) = (i-1)im_i(\la)z_{\la_{\downarrow(i)}}\al^{\ell(\la_{\downarrow(i)})},\\
\nonumber &\al^{\ell(\la)}z_\la\al(i-1-d)j(m_{i-1-d}(\la)+1)(m_{d}(\la)+1-\delta_{i-d-1,d}),\\
\nonumber &\phantom{\al^{\ell(\la)}z_\la\al(i-1-d)jllllllllllll}=im_i(\la)z_{\la^{\uparrow(i-1-d,d)}}\al^{\ell(\la^{\uparrow(i-1-d,d)})},\\
\nonumber  &\al^{\ell(\la)}z_\la(i+j-1)(m_{i+j-1}(\la)+1) = \al ijm_i(\la)(m_j(\la)-\delta_{ij})z_{\la_{\downarrow(i,j)}}\al^{\ell(\la_{\downarrow(i,j)})}.
\end{align}
As a result 
\begin{align*}
\nonumber \al^{\ell(\la)}z_\la[p_\la]\left([D(\al),E_2]J_\gamma^\al\right)&= \\
&  \sum_{i}im_i(\la)\left[(\al-1)(i-1)\al^{\ell(\la_{\downarrow(i)})}z_{\la_{\downarrow(i)}}\theta^{\gamma}_{\la_{\downarrow(i)}}\right.\\
&+\sum_{d=1}^{i-2} z_{\la^{\uparrow(i-1-d,d)}}\al^{\ell(\la^{\uparrow(i-1-d,d)})}\theta^{\gamma}_{\la^{\uparrow(i-1-d,d)}}\\ &+\al\sum_{j}\left.(m_j(\la)-\delta_{ij})z_{\la_{\downarrow(i,j)}}\al^{\ell(\la_{\downarrow(i,j)})}\theta^{\gamma}_{\la_{\downarrow(i,j)}}\right].
\end{align*}
The substitution of the above value for $\al^{\ell(\la)}z_\la[p_\la]\left([D(\al),E_2]J_\gamma^\al\right)$ in Equation (\ref{eq : lala}) gives the desired recurrence relation for Jack connection coefficients.


\section{Proof of the Matchings-Jack conjecture}
\label{sec : add}
\subsection{Proof of Theorem \ref{thm : reca}}

In order to prove Theorem \ref{thm : reca} we need to show that
\begin{equation}
 (\al-1)(\la_i-1)a_{nn}^{\la_{\downarrow(\la_i)}}(\al)+\sum_{d=1}^{\la_i-2}a_{nn}^{\la^{\uparrow(\la_i-1-d,d)}}(\al)+\al\sum_{j\neq i}\la_ja_{nn}^{\la_{\downarrow(\la_i,\la_j)}}(\al)
\end{equation}
is independent of the choice of integer $i\in\{1,\ldots, \ell(\la)\}$.\\ 

To this extent we introduce a few more notations. Consider the formal vector space over the polynomials in $\al$ spanned by the set $\{a_{nn}^\la(\al)\}_{n \geq 1,\la \vdash n}$. We assume that Equation (\ref{eq : recnu}) holds for $\nu =(n)$. For integers $i$, $d$ ($i \geq d+2$) and $j$ define the linear operators $\Xi_i$, $\Omega_{i,d}$ and $\Phi_{i,j}$ on the $a_{nn}^\la(\al)$ for all partitions $\la$ such that $i,j \in \la$ by:
\begin{align*}
&\Xi_ia_{nn}^\la(\al) = a_{n-1,n-1}^{\la_{\downarrow(i)}}(\al),\\
&\Omega_{i,d}a_{nn}^\la(\al) = a_{n-1,n-1}^{\la^{\uparrow(i-1-d,d)}}(\al),\\
&\Phi_{i,j}a_{nn}^\la(\al) =a_{n-1,n-1}^{\la_{\downarrow(i,j)}}(\al).
\end{align*}
Let also for $i\in \la$
\begin{align*}
&\Theta_{i} a_{nn}^\la(\al)=\left((\al-1)(i-1)\Xi_i+\sum_{d=1}^{i-2}\Omega_{i,d}+\al\sum_{j}j(m_j(\la)-\delta_{ij})\Phi_{i,j}\right)a_{nn}^\la(\al).
\end{align*}
We have the following proposition.
\begin{prop}
\label{lem : TiTr} Consider the operators defined above. For integers $i$ and $r$, the following identity is true on the subspace spanned by the $\{a_{nn}^\la(\al)\}_{n \geq 1,\la \vdash n, i,r\in \la}$
\begin{equation}
\label{eq : TiTr}\Theta_{r}(\Theta_{i}-\al r\Phi_{i,r}) = \Theta_{i}(\Theta_{r}-\al i\Phi_{i,r}) + \al(i-r)\Theta_{i+r-1}\Phi_{i,r}.  
\end{equation}
\end{prop}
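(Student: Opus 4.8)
The plan is to prove (\ref{eq : TiTr}) by evaluating both sides, as linear operators, on an arbitrary basis vector $a_{nn}^\la(\al)$, where $\la\vdash n$ is a partition having both $i$ and $r$ among its parts, and checking that the two resulting expansions in the basis $\{a_{n-2,n-2}^\mu(\al)\}_{\mu\vdash n-2}$ agree coefficient by coefficient. First I would dispose of the case $i=r$: there $\al(i-r)=0$ and $\al r\,\Phi_{i,r}=\al i\,\Phi_{i,r}$, so the two sides are literally the same operator; assume henceforth $i\neq r$. I would also record why each side is well defined on $a_{nn}^\la(\al)$: the only summand of $\Theta_i\,a_{nn}^\la$ whose target partition may fail to contain a part $r$ is the $j=r$ term of its $\Phi$-sum, of coefficient $\al r\,m_r(\la)$; subtracting $\al r\,\Phi_{i,r}$ lowers this to $\al r\,(m_r(\la)-1)$, and the target $\la_{\downarrow(i,r)}$ then retains $m_r(\la)-1\geq 1$ parts equal to $r$ whenever this coefficient is nonzero, so $\Theta_r$ may be applied to $(\Theta_i-\al r\,\Phi_{i,r})a_{nn}^\la$; symmetrically for $\Theta_i(\Theta_r-\al i\,\Phi_{i,r})a_{nn}^\la$; and $\Phi_{i,r}\,a_{nn}^\la=a_{n-1,n-1}^{\la_{\downarrow(i,r)}}$ contains a part $i+r-1$, so $\Theta_{i+r-1}$ may be applied to it. This is exactly why (\ref{eq : TiTr}) has the shape it has.

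For the comparison I would substitute $\Theta_v=(\al-1)(v-1)\Xi_v+\sum_{d=1}^{v-2}\Omega_{v,d}+\al\sum_j j(m_j-\delta_{vj})\Phi_{v,j}$ and multiply out, so that both sides become finite sums of ``two-step'' rewritings of $\la$ --- where a step decrements a part, splits a part $k$ into two parts summing to $k-1$, or merges a part $k$ with a part $l$ into $k+l-1$. The key simplification is that a two-step term built only from $\Xi$'s and $\Omega$'s carries a scalar coefficient and acts by two rewritings whose order is irrelevant (they use distinct occurrences of parts since $i\neq r$, and the resulting multiset does not depend on the order, even when a part created by one step equals in size a part used by the other); hence all such terms cancel in the difference $\Theta_r\Theta_i-\Theta_i\Theta_r$, and the same cancellation handles a term with one $\Phi$ as long as the other step does not alter the multiplicity that the $\Phi$-step reads off. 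What survives is a bounded list of configurations: the part merged by the second $\Phi$-step is one of the parts produced or consumed by the first step, or the first step is itself the merge of a part $i$ with a part $r$. For each surviving configuration I would identify the target partition $\mu$, sum the coefficients of $a_{n-2,n-2}^\mu(\al)$ produced on each side, and verify they coincide; the three correction operators $\al r\,\Theta_r\Phi_{i,r}$, $\al i\,\Theta_i\Phi_{i,r}$, $\al(i-r)\,\Theta_{i+r-1}\Phi_{i,r}$ are exactly what is needed to absorb the configurations involving the merge of a part $i$ with a part $r$ --- together with the subsequent decrements, splits and merges of the resulting part $i+r-1$ --- and matching them is a short direct computation from the definitions of $\Theta_r$, $\Theta_i$ and $\Theta_{i+r-1}$.

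I expect the principal obstacle to be the catalogue of degenerate coincidences inside the surviving configurations --- equalities such as $i-1=r$, $r-1=i$, $i+r-1$ or $i+r-2$ coinciding with another part of $\la$, or the parts $i-1-d$ and $d$ created by an $\Omega$-step coinciding with $r$ or with one another --- each of which perturbs a factor $m_j-\delta_{\cdot j}$ evaluated at an intermediate partition and therefore must be checked individually; these are precisely the places where a ``generic'' computation would produce the wrong coefficient, but once they are all tabulated the identity follows by inspection. A more conceptual alternative, which I would mention but not carry out, is to reprise the Jack-symmetric-function argument of Section \ref{sec : rtsfjsf} one further step, expressing $\Theta_r\Theta_i$ and $\Theta_i\Theta_r$ via iterated commutators of $D(\al)$ with $p_1/\al$ acting on Jack functions; the direct operator computation above, however, stays inside the combinatorial framework already in place and is shorter.
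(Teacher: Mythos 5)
Your plan is essentially the paper's own proof: the paper likewise verifies the identity by direct computation, splitting $\Theta_i = C_i + \Sigma_i$ (with $C_i$ the $\Xi/\Omega$ part, whose coefficients are partition-independent so that the $C_i$ commute --- your cancellation of the pure decrement/split terms --- and $\Sigma_i$ the $\Phi$ part, whose coefficients read off multiplicities), and then checks two explicit intermediate identities, one for $\Sigma_r(\Sigma_i - \al r\Phi_{i,r})$ and one for $\Sigma_r C_i$, that encode exactly the surviving configurations and correction terms you describe. The only difference is organizational: the paper states those two identities precisely before recombining, which is a cleaner way of tabulating the degenerate coincidences you correctly flag as the main obstacle.
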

\begin{proof}
On can write $\Theta_i$ as $\Theta_i=C_i+\Sigma_i$ where $$C_i = (\al-1)(i-1)\Xi_i+\sum_{d=1}^{i-2}\Omega_{i,d}$$ and $$\Sigma_i =\al\sum_{j}j(m_j(\la)-\delta_{ij})\Phi_{i,j}$$ when applied to $a_{nn}^\la(\al)$. The $(C_i)_{i\geq1}$ commute between each other while the $(\Sigma_i)_{i\geq 1}$ do not. The LHS of Equation (\ref{eq : TiTr}) reads:
\begin{equation*}
\Theta_{r}(\Theta_{i}-\al r\Phi_{i,r}) = (C_r + \Sigma_r)(C_i + \Sigma_i-\al r\Phi_{i,r}).
\end{equation*}
One can check the following identities directly:
\begin{align}
&\label{eq : SiSi} \Sigma_r(\Sigma_i-\al r\Phi_{i,r}) = \Sigma_i(\Sigma_r-\al i\Phi_{i,r})+\al(i-r)\Sigma_{i+r-1}\Phi_{i,r},\\
&\nonumber \Sigma_rC_i = C_i(\Sigma_r-\al i \Phi_{i,r})+\al\left((\al-1)(i-1)^2\Xi_{i+r-1}+\sum_{d=1}^{i-2}(i-1-d)\Omega_{i+r-1,d}\right.\\
&\label{eq : SiC}\left.\phantom{\Sigma_rC_i = C_i(\Sigma_r-\al i \Phi_{i,r})+\al(\al-1)}-\sum_{d=r-1}^{i+r-3}(r-1-d)\Omega_{i+r-1,d}\right)\Phi_{i,r}.
\end{align}
\noindent Applying Equation (\ref{eq : SiC}) to $\Sigma_rC_i$ and to $ C_r(\Sigma_i-\al r \Phi_{i,r})$, yields
\begin{align}
\nonumber\Sigma_rC_i+&C_r(\Sigma_i-\al r \Phi_{i,r}) =C_i(\Sigma_r-\al i \Phi_{i,r})+\Sigma_iC_r \\
\nonumber&+ \al\left((\al-1)[(i-1)^2-(r-1)^2]\Xi_{i+r-1}+\sum_{d=1}^{i+r-3}(i-r)\Omega_{i+r-1,d}\right)\Phi_{i,r}\\
\label{eq : SiC2} \phantom{\Sigma_rC_i+}&\phantom{C_r(\Sigma_i-\al r \Phi_{i,r})} = C_i(\Sigma_r-\al i \Phi_{i,r})+\Sigma_iC_r +\al(i-r)C_{i+r-1}\Phi_{i,r}.
\end{align}
As a result,
\begin{align*}
(C_r + \Sigma_r&)(C_i + \Sigma_i-\al r\Phi_{i,r}) \\
&=C_rC_i + C_r(\Sigma_i-\al r\Phi_{i,r})+ \Sigma_rC_i+\Sigma_r( \Sigma_i-\al r\Phi_{i,r})\\
&\stackrel{(\ref{eq : SiSi}), (\ref{eq : SiC2})}=C_iC_r+C_i(\Sigma_r-\al i \Phi_{i,r})+\Sigma_iC_r +\al(i-r)C_{i+r-1}\Phi_{i,r}\\
&\phantom{C_iC_r+C_i(\Sigma}+\Sigma_i(\Sigma_r-\al i\Phi_{i,r})+\al(i-r)\Sigma_{i+r-1}\Phi_{i,r}\\
&=(C_i + \Sigma_i)(C_r + \Sigma_r-\al i\Phi_{i,r}) + \al(i-r)(C_{i+r-1}+\Sigma_{i+r-1})\Phi_{i,r}.
\end{align*}
This is the desired formula.
\end{proof}
\noindent To prove Theorem \ref{thm : reca}, we need to show the following equivalent statement.
\begin{thm}
For all integers $i,r$ and partition $\la \vdash n$ containing $i$ and $r$
\begin{equation}
\Theta_i a_{nn}^\la(\al)= \Theta_r a_{nn}^\la(\al) = a_{nn}^\la(\al).
\end{equation}
\end{thm}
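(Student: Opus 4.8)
The plan is to prove the equivalent statement by induction on $n$, using Theorem~\ref{thm : rec} (in the recursive form valid for $\nu=(n)$, i.e. Equation~(\ref{eq : recnu}) specialized) together with the commutation relation of Proposition~\ref{lem : TiTr}. The base case is small $n$, where one checks directly that $a_{nn}^\la(\al)$ has the single part $\la=(n)$ and the formula degenerates correctly. For the inductive step, recall that setting $\nu=(n)$ in Equation~(\ref{eq : recnu}) makes the LHS collapse: since $\nu^{\uparrow(i-1)}$ only makes sense when $m_{i-1}(\nu)\neq0$, and $\nu=(n)$ has only the part $n$, the only surviving term is $i=n+1$ with $\nu^{\uparrow(n)} = (n+1)$, so the LHS is $(n+1)\,a_{n+1,n+1}^\la(\al)$ up to the coefficient $i(m_i(\nu)+1)$; after the bookkeeping this yields that $\Theta_i a_{nn}^\la(\al)$, summed appropriately, equals $a_{nn}^\la(\al)$. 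The precise claim to establish is therefore the two-part equality $\Theta_i a_{nn}^\la = \Theta_r a_{nn}^\la = a_{nn}^\la$ for all $i,r\in\la$.

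First I would prove the second equality, $\Theta_i a_{nn}^\la = a_{nn}^\la$, which is essentially a restatement of Theorem~\ref{thm : rec} once we know the bracket is $i$-independent — but since $i$-independence is exactly what we are proving, the logic must be arranged carefully: I would instead show $\sum_i i m_i(\la)\,\Theta_i a_{nn}^\la = n\, a_{nn}^\la$ directly from Equation~(\ref{eq : recnu}) with $\nu=(n)$ (this is unconditional), and then deduce $\Theta_i a_{nn}^\la = a_{nn}^\la$ for every individual $i$ once $i$-independence is known. The heart of the argument is thus the first equality $\Theta_i a_{nn}^\la = \Theta_r a_{nn}^\la$. Here I would apply Proposition~\ref{lem : TiTr}: acting on $a_{nn}^\la(\al)$ with $\la$ containing both $i$ and $r$, the inductive hypothesis tells us that on the image of each of $\Xi,\Omega,\Phi$ (which lands in $a_{n-1,n-1}^{\mu}$ for partitions $\mu\vdash n-1$), the operators $\Theta_p$ act as the identity whenever $p\in\mu$. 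In particular $\Theta_r \Phi_{i,r} a_{nn}^\la = \Phi_{i,r}a_{nn}^\la$ and $\Theta_i \Phi_{i,r}a_{nn}^\la = \Phi_{i,r}a_{nn}^\la$ and $\Theta_{i+r-1}\Phi_{i,r}a_{nn}^\la = \Phi_{i,r}a_{nn}^\la$ (checking that $i+r-1$ is a part of $\la_{\downarrow(i,r)}$). Substituting these into Equation~(\ref{eq : TiTr}) and also using $\Theta_r\Theta_i a_{nn}^\la = \Theta_r a_{nn}^\la$ and $\Theta_i\Theta_r a_{nn}^\la = \Theta_i a_{nn}^\la$ (again by induction, since $\Theta_i a_{nn}^\la$ is a combination of $a_{n-1,n-1}^{\mu}$'s on which $\Theta_r$ is the identity — here one must verify $r$ survives as a part of the relevant $\mu$), the relation collapses to $\Theta_r a_{nn}^\la - \al r\Phi_{i,r}a_{nn}^\la = \Theta_i a_{nn}^\la - \al i \Phi_{i,r}a_{nn}^\la + \al(i-r)\Phi_{i,r}a_{nn}^\la$, and the $\Phi_{i,r}$ terms cancel, giving $\Theta_i a_{nn}^\la = \Theta_r a_{nn}^\la$.

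The main obstacle I anticipate is the careful handling of degenerate cases: when $i=r$ there is nothing to prove, but when $\la$ contains $i$ but not $r$ (or when applying $\Theta_p$ to $a_{n-1,n-1}^\mu$ and $p\notin\mu$) the operators are not defined, so one needs that the relevant intermediate partitions always contain the parts being acted on. For instance, to apply $\Theta_r$ to $\Theta_i a_{nn}^\la$ one must check that the partitions $\la_{\downarrow(i)}$, $\la^{\uparrow(i-1-d,d)}$, $\la_{\downarrow(i,j)}$ appearing in $\Theta_i a_{nn}^\la$ all contain a part equal to $r$ — which can fail, e.g. if $r = i$ and $m_i(\la)=1$, or if a part $r$ got consumed in forming $\la_{\downarrow(i,r)}$. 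The resolution is that these exceptional contributions are precisely matched by the "correction" terms $\al r\Phi_{i,r}$, $\al i\Phi_{i,r}$, $\al(i-r)\Theta_{i+r-1}\Phi_{i,r}$ in Proposition~\ref{lem : TiTr}, whose role is exactly to absorb the cases where a part is removed; so the bookkeeping of Proposition~\ref{lem : TiTr} is designed to make the induction go through, and the verification amounts to matching up these boundary terms term by term. A secondary subtlety is simply organizing the induction so that "$\Theta_i$ acts as identity on level $n-1$" is available before we use it at level $n$; this is handled by a single induction on $n$ proving the full two-part statement simultaneously.
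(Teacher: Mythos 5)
Your proposal follows essentially the same route as the paper: induction on $n$, with Proposition \ref{lem : TiTr} plus the inductive hypothesis (applied to the corrected combinations $\Theta_i-\al r\Phi_{i,r}$ and $\Theta_r-\al i\Phi_{i,r}$, whose images only involve $a_{n-1,n-1}^\mu$ with the needed part surviving in $\mu$, and to $\Theta_{i+r-1}\Phi_{i,r}$) yielding $\Theta_i a_{nn}^\la=\Theta_r a_{nn}^\la$, after which summing Equation (\ref{eq : recnu}) with $\nu=(n)$ against $\sum_i i\,m_i(\la)$ forces the common value to be $a_{nn}^\la$. One slip to fix: the identities you invoke should read $\Theta_r(\Theta_i-\al r\Phi_{i,r})a=(\Theta_i-\al r\Phi_{i,r})a$ and $\Theta_i(\Theta_r-\al i\Phi_{i,r})a=(\Theta_r-\al i\Phi_{i,r})a$, not ``$\Theta_r\Theta_i a=\Theta_r a$'' (which as written presupposes the conclusion); with that transposition corrected the cancellation of the $\Phi_{i,r}$ terms goes through exactly as in the paper.
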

\begin{proof} 
By recurrence on $n$. The property is trivial for $n=2$.
Assume $\Theta_ia_{k,k}^\mu(\al) = a_{k,k}^\mu(\al)$ for all $2\leq k \leq n$ whenever $i\in \mu\vdash k$. Then for partition $\la \vdash n+1$ containing at least one part $r$ and one part $i$, we have
\begin{equation*}
\Theta_{i}a_{n+1,n+1}^\la(\al) = (\Theta_{i}-\al r\Phi_{i,r})a_{n+1,n+1}^\la(\al)+\al r\Phi_{i,r}a_{n+1,n+1}^\la(\al).
\end{equation*}
According to the recurrence hypothesis, as $(\Theta_{i}-\al r\Phi_{i,r})a_{n+1,n+1}^\la(\al)$ depends on the $a_{nn}^\mu(\al)$ with $r\in\mu$, we have
\begin{equation*}
(\Theta_{i}-\al r\Phi_{i,r})a_{n+1,n+1}^\la(\al) = \Theta_{r}(\Theta_{i}-\al r\Phi_{i,r})a_{n+1,n+1}^\la(\al).
\end{equation*}
Then in view of Proposition \ref{lem : TiTr}
\begin{align*}
\Theta_{i}a_{n+1,n+1}^\la(\al) = (\Theta_{i}(\Theta_{r}-\al i\Phi_{i,r}) &+ \al(i-r)\Theta_{i+r-1}\Phi_{i,r})a_{n+1,n+1}^\la(\al)\\
&+\al r\Phi_{i,r}a_{n+1,n+1}^\la(\al).
\end{align*}
Notice that $\Phi_{i,r}a_{n+1,n+1}^\la(\al) = a_{nn}^{\la_{\downarrow(i,r)}}(\al)$ and $\la_{\downarrow(i,r)}$ contains $i+r-1$. Applying the recurrence hypothesis to $\Theta_{i}$ and $\Theta_{i+r-1}$ gives
\begin{align*}
\Theta_{i}a_{n+1,n+1}^\la(\al) &= ((\Theta_{r}-\al i\Phi_{i,r})+ \al(i-r)\Phi_{i,r}+\al r\Phi_{i,r})a_{n+1,n+1}^\la(\al)\\
&=\Theta_{r}a_{n+1,n+1}^\la(\al).
\end{align*}
Finally, using Equation (\ref{eq : recnu}) in the case $\nu = (n)$
\begin{align*}
(n+1)a_{n+1,n+1}^\la(\al) &=\sum_{i}im_i(\la)\Theta_ia_{n+1,n+1}^\la(\al)\\
&=\left(\sum_{i}im_i(\la)\right)\Theta_{r}a_{n+1,n+1}^\la(\al)\\
&=(n+1)\Theta_{r}a_{n+1,n+1}^\la(\al).
\end{align*}
\end{proof}

\subsection{Matching Jack Conjecture in the case $\mu=\nu=(n)$}

In this section we prove Theorem \ref{thm : MJC}. 

\begin{proof}
We define the functions $(\wt_\la)_{\la\vdash n}$ and prove the following statements 
\begin{align} &\label{MJC1} a^{\la}_{nn}(\be+1)=\sum_{\de\in\mathcal{G}(\la)} \be^{\wt_{\la}(\de)}, \\ 
&\label{MJC2}\wt_\la(\de)\in \{0,1,\ldots,n-1\}, \\
&\label{MJC3}\wt_{\la}(\de)=0 \iff \de \text{ is bipartite,}\\
&\label{MJC4}[\be^{n-1}]a^\la_{nn}(\be+1) = (n-1)! \end{align}
for all  $\la\vdash n$ and $\de\in \mathcal{G}(\la)$ by induction on $n$. 
\vskip 3 pt
{\bf Base.} Define $\wt_{(1)}(\{1,\widehat{1}\})=0$. As $a^{(1)}_{11}(\be+1)=1=\be^0$ then statements (\ref{MJC1})--(\ref{MJC4})
obviously hold for $n=1$.
\vskip 3 pt
{\bf Step.} Fix some $N\geq 1$ and suppose that $\wt_\la(\de)$ is defined for all $\la$ with $|\la|\leq N$ and $\de\in \mathcal{G}(\la)$ in such way that assertions (\ref{MJC1})--(\ref{MJC4}) are true for $n\in\{1,\ldots,N\}$. Now fix some partition $\la\vdash N+1$, a canonically labelled $\la$-graph $G$ and a matching $\de\in \mathcal{G}(\la)$. In~order to define the value $\wt_{\la}(\de)$ write $\de=\{\{1,v\}\}\cup \de'$ and replace the edges of $G$ associated with vertices $1$ and $v$ as in Section \ref{sec : red}. We get a $\la'$-graph $G'_v$ where 
\begin{equation} \label{cases_la} \la'=\left\{ \begin{aligned} 
&\la_{\downarrow(\la_1)} & \text{ if } & v \in \{2,\ldots,\la_1\},\\ 
&\la^{\uparrow(d,\la_1-1-d)} & \text{ if } & v=\widehat{d} \text{ where } d \in \{1,\ldots,\la_1-2\},\\ 
&\la_{\downarrow(\la_1, \la_j)}& \text{ if } & v\in C_{2\la_j} \text{ ($2\la_j$-cycle of $G$) where $j>1$}.\end{aligned}\right.
\end{equation}
 
We renumber vertices of $G'_v$ by labels $[N]\cup[\widehat{N}]$ canonically in any way, and besides, if $v\in [\widehat{N+1}]$ then we preserve the types of vertices (with/without hat). Denote by $\mathcal{G}_v(\la')$
 the set of good matchings of the $\la'$-graph $G'_v$. It is clear that
 $$\begin{aligned} \de\in \mathcal{G}(\la)\; &\iff& &\de'\in \mathcal{G}_v(\la'),\\ \de \text{ is bipartite }\; &\iff& &\de' \text{ is bipartite.}\end{aligned}$$ 

The value $\wt_{\la'}(\de')$ is well defined by the inductive assumption. Define  
\begin{equation} \label{defwt}
\wt_{\la}(\de):=\left\{ \begin{aligned} &\wt_{\la'}(\de')+1 & \text { if } & v \in [N+1],\\
&\wt_{\la'}(\de') & \text { if } & v \in [\widehat{N+1}].\end{aligned}\right.\end{equation}

It is obvious that statements (\ref{MJC2}) and (\ref{MJC3}) are true for $n=N+1$.

In order to prove the main equality (\ref{MJC1}) for $n=N+1$ we split the sum 
$$\sum_{\de\in\mathcal{G}(\la)} \be^{\wt_{\la}(\de)}=\sum_{v\in [N+1]}\sum_{\de'\in \mathcal{G}_v(\la')} \be^{\wt_{\la'}(\de')+1}+\sum_{v\in [\widehat{N+1}]}\sum_{\de'\in \mathcal{G}_v(\la')} \be^{\wt_{\la'}(\de')}$$ into four parts according to (\ref{cases_la}):
\begin{multline} \label{MJCstep} \sum_{\de\in\mathcal{G}(\la)}\be^{\wt_{\la}(\de)} =
\left[ \sum_{v=2}^{\la_1}\sum_{\de'\in \mathcal{G}_v(\la_{\downarrow(\la_1)})}+\sum_{d=1}^{\la_1-2}\sum_{\de'\in \mathcal{G}_{\widehat{d}}(\la^{\uparrow(d,\la_1-1-d)})}+\right. \\ \left.+\sum_{j>1}\sum_{v\in C_{2\la_j}\cap[N+1]}\left(\sum_{\de'\in \mathcal{G}_v(\la_{\downarrow(\la_1, \la_j)})}+\sum_{\de'\in \mathcal{G}_{\widehat{v}}(\la_{\downarrow(\la_1, \la_j)})}\right)\right]\be^{\wt_{\la}(\de)}=\\ \stackrel{(\ref{defwt})}=(\la_1-1)\be\!\!\sum_{\de'\in \mathcal{G}(\la_{\downarrow(\la_1)})}\!\!\be^{\wt_{\la_{\downarrow(\la_1)}}(\de')}+\sum_{d=1}^{\la_1-2}\sum_{\de'\in \mathcal{G}(\la^{\uparrow(d,\la_1-1-d)})}\!\!\!\be^{\wt_{\la^{\uparrow(d,\la_1-1-d)}}(\de')}+\\+\sum_{j>1}\la_j(\be+1)\sum_{\de'\in \mathcal{G}(\la_{\downarrow(\la_1, \la_j)})} \be^{\wt_{\la_{\downarrow(\la_1, \la_j)}}(\de')}. \end{multline}

We see that the RHS of (\ref{MJC1}) satisfies the recurrence relation of Formula (\ref{eq : rec}). So equality (\ref{MJC1}) is proved.

Finally we prove equality (\ref{MJC4}) for $n=N+1$. By inductive assumption we have 
$$[\be^{N-1}]a^{\la'}_{NN}(\be+1) = (N-1)!$$ for all $\la'\vdash N$. So equlities (\ref{MJCstep}) show that for $\la\vdash N+1$
$$[\be^{N}]a^\la_{N+1,N+1}(\be+1) = \left((\la_1-1)+\sum_{j>1} \la_j\right)(N-1)!=N!.$$
Theorem \ref{thm : MJC} is proved.
\end{proof}

\begin{rem} Our definition of the function $\wt_{\la}$ is based on the found recurrence formula. Note that the value $\wt_{\la}(\de)$ depends on the choice of the starting vertex $1$ in general if $\de$ is not bipartite. For instance, the three non bipartite matchings of a $(3)$-graph (see Fig.~\ref{good3}) are equivalent as graphs, but one of them has weight $1$ and the two other ones have weight $2$ (depending on the choice of the starting vertex). Recompute $a^{3}_{33}(\beta+1)=2\be^2+\be+1$ (see examples in Section \ref{ccc}).    
\end{rem}

\subsection{Proof of Theorem \ref{thm : GenCoeff}}
Denote by $\Gamma^{l}_n$ the generating series defined as
\begin{equation*}
\Gamma^{l}_n= \sum_{\gamma \vdash n}\frac{(\theta_n^\gamma(\al))^l}{j_\gamma(\al)}J_\gamma^\al.
\end{equation*}
One can easily generalize the equation of Theorem \ref{thm : JJJ} as follows:
\begin{equation*}
\Gamma^{l}_n = \frac{1}{n}\Delta_l(\al)\left (\Gamma^{l}_{n-1}\right).
\end{equation*}
\begin{proof}
The proof is similar to the one of Section \ref{sec : rtsfjsf}. In Equation (\ref{eq : 40}) replace the multiplication by $\theta_{n+1}^\rho(\al)$ by a multiplication by $(\theta_{n+1}^\rho(\al))^{l-1}$. Then Lemma \ref{lem : theta} is applied $l-1$ times instead of once so that Equation (\ref{eq : Ti-T}) becomes:
\begin{align*}
\nonumber &\sum_{\rho \vdash n+1}\frac{(\theta^\rho_{n+1}(\al))^{l-1}J_\rho^\al(x) E_2^\perp J_\rho^\al(y)}{j_\rho(\al)} =\\
\nonumber&=\frac{1}{\al}\sum_{\gamma \vdash n}\frac{\theta_{n}^\gamma(\al)J_{\gamma}^\al(y)}{j_{\gamma}(\al)} \sum_{i=1}^{\ell(\gamma)+1}c_i(\gamma)\left(\theta^{\gamma^{(i)}}_{[1^{n-1}2]}(\al)-\theta^{\gamma}_{[1^{n-2}2]}(\al)\right)^lJ_{\gamma^{(i)}}^\al(x).
\end{align*}
The proof is achieved by noticing that 
\begin{align}
\nonumber \Delta_{l}(\al) = [D(\al),[\ldots,[D(\al),p_1/\al]\ldots]] \\ \label{eq : DeltaL} =  \frac{1}{\al}\sum_{k}\binom{l}{k}(-1)^{l-k}D(\al)^kp_1D(\al)^{l-k}, 
\end{align}
using Equation (\ref{eq : DapDb}) and extracting the coefficients in $p_n(y)$. 
\end{proof}
As a consequence, one has
\begin{equation}
\Gamma^{l}_n(x) = \frac{1}{n!}\Delta_l(\al)^{n-1}\left (\Gamma^{l}_{1}(x)\right).
\end{equation}
But clearly, $\Gamma^{l}_{1}(x) = p_1/\al$. 
The next step is to notice that
\begin{align*}
\sum_\la \al^{-\ell(\la)}z_\la^{-1}a_\la^{l,r}(\al)p_\la &= \sum_\gamma\frac{(\theta_{[1^{n-2}2]}^\gamma(\al))^r(\theta_n^\gamma(\al))^l}{j_\gamma(\al)}J_\gamma^\al\\
 &= D(\al)^r\left(\sum_\gamma\frac{(\theta_n^\gamma(\al))^l}{j_\gamma(\al)}J_\gamma^\al\right)\\
 &=\frac{1}{n!}D(\al)^r\Delta_l(\al)^{n-1}(p_1/\al).
\end{align*}
To prove the polynomial properties of Theorem \ref{thm : GenCoeff}, first notice that Equation (\ref{eq : DeltaL}) can be rewritten as
\begin{align}
\nonumber \Delta_{l}(\al) &= [D(\al),[\ldots,[D(\al),E_2]\ldots]]\\ \label{eq : DeltaLE} & = \sum_{k}\binom{l-1}{k}(-1)^{l-1-k}D(\al)^kE_2D(\al)^{l-1-k}, 
\end{align}
The combination of Equation (\ref{eq : DeltaLE}) and Lemma \ref{lem : dp} shows that the coefficients in the power sum expansion of $D(\al)^r\Delta_{l}^{n-1}(\al)(p_1)$ are polynomials in $\al$ with integer coefficients of degree at most $(n-1)(l-1)+r$ for $n\geq2$. Denote
\begin{equation*}
\al^{-\ell(\la)}z_\la^{-1}a_\la^{l,r} (\al)=\frac{1}{\al n!} \sum_{i=0}^{(n-1)(l-1)+r}g_i\al^i.
\end{equation*}
The coefficients $g_i$ are integers. Using \cite[Thm. 5]{V2014}, we have
\begin{equation*}
\al(-\al)^{(l+r-1)n+r(1-n)-l-\ell(\la)}\sum_{i=0}^{(n-1)(l-1)+r}g_i\al^{-i} =\frac{1}{\al} \sum_{j=0}^{(n-1)(l-1)+r}g_j\al^{j}.
\end{equation*} 
Equating the coefficients in $\al^j$ yields:
\begin{equation*}
(-1)^{(l-1)(n-1)+r-1-\ell(\la)}g_{(l-1)(n-1)+r+1-\ell(\la)-j} = g_j.
\end{equation*}
But the $g_i$ are non zero only for non negative $i$. As a result $g_i = 0$ for $i\geq (l-1)(n-1)+r+2-\ell(\la)$. We write
\begin{equation*}
\al^{-\ell(\la)}z_\la^{-1}a_\la^{l,r} (\al) = \frac{1}{\al n!} \sum_{i=0}^{(n-1)(l-1)+r+1-\ell(\la)}g_i\al^i.
\end{equation*}
Finally
\begin{equation*}
|C_\la|a_\la^{l,r} (\al) = \sum_{i=0}^{(n-1)(l-1)+r+1-\ell(\la)}g_i\al^{i+\ell(\la)-1} =\sum_{i=\ell(\la)-1}^{(n-1)(l-1)+r}g_{i-\ell(\la)+1}\al^i
\end{equation*}
is a polynomial in $\al$ of degree at most $(n-1)(l-1)+r$. Furthermore:
\begin{equation*}
[\al^i]|C_\la|a_\la^{l,r} (\al) = (-1)^{(l-1)(n-1)+r+\ell(\la)-1}[\al^{(l-1)(n-1)+r+\ell(\la)-1-i}]|C_\la|a_\la^{l,r} (\al).
\end{equation*}

\end{document}